\documentclass[a4paper,twoside]{amsart}

\usepackage{amssymb}
\usepackage{esint}                % nach den AMS-packages laden!
\usepackage{bbm}

\newif\ifpdf
\ifx\pdfoutput\undefined
\pdffalse % we are not running PDFLaTeX
\else
\pdfoutput=1 % we are running PDFLaTeX
\pdftrue
\fi

\ifpdf
\ifx\abbrloaded\relax
    \let\next= 
\else
    \let\next=\relax
\fi

\next

\let\abbrloaded=y

\let\T=\texttt

\let\add=\advance
\let\div=\divide
\let\mul=\multiply
\let\ex=\expandafter

\catcode`\"=13
\def\lqq{``\global\let"=\rqq}
\def\rqq{''\global\let"=\lqq}
\let"=\lqq

\def\<#1>{%
    \expandafter\ifx\csname<#1>\endcsname\relax
        \errmessage{abbreviation <#1> undefined!}
    \else
        \csname<#1>\endcsname
    \fi
}
\def\abbr#1#2{%
    \expandafter\def\csname<#1>\endcsname{#2}%
}

\abbr{THANH}{H\`an Th\^e\llap{\raise 0.5ex\hbox{\'{}}} Th\`anh}
\abbr{HZ}{\textit{hz}}
\abbr{KF}{\textit{kf\kern-.05em}}
\abbr{KR}{\textit{K$\varrho$}}
\abbr{EK}{\textit{{\Large$\varepsilon$\kern-.1em}k}}
\abbr{JP}{\textit{jp}}
\abbr{MF}{\MF}
\abbr{CMR}{CMR}
\abbr{CMSS}{CMSS}
\abbr{MM}{Multiple Master}
\abbr{T1}{Type\nobreak\,1}
\abbr{PFB}{PFB}
\abbr{CM}{Computer Modern}
\abbr{TEX}{\TeX}
\abbr{LATEX}{\LaTeX}
\abbr{PDFTEX}{pdf\TeX}
\abbr{URW}{URW}
\abbr{ASCII}{ASCII}
\abbr{DTP}{DTP}
\abbr{DVI}{DVI}
\abbr{PS}{PS}
\abbr{TFM}{TFM}
\abbr{PDF}{PDF}
\abbr{HJ}{H\kern.1em\&\kern.1emJ}
\abbr{UNIX}{UNIX}
\abbr{AMIGA}{Amiga}
\abbr{DOS}{DOS}
\abbr{MAC}{Macintosh}
\abbr{C}{C}
\abbr{TRUETYPE}{TrueType}
\abbr{WIN32}{Win32}
\abbr{FPTEX}{fp\TeX}
\abbr{TETEX}{te\TeX}
\abbr{MIKTEX}{Mik\TeX}
\abbr{CMACTEX}{CMac\TeX}
\abbr{AFM}{AFM}
\abbr{MMTOOLS}{MMTOOLS}
\abbr{VNR}{VNR}
\abbr{CS}{CS}
\abbr{HTML}{HTML}
\abbr{WWW}{WWW}
\abbr{ID}{InDesign}
\abbr{ADOBE}{Adobe}
\abbr{3B2}{3B2}
\abbr{...}{\dots}
\abbr{OMEGA}{$\Omega$}
\abbr{ETEX}{$\varepsilon$-\TeX}
\abbr{NTS}{NTS}
\abbr{LF}{\textrm{\it letter\!\_\kern.1emfit}}
\abbr{SGML}{SGML}
\abbr{XML}{XML}
\abbr{NTG}{NTG}
\abbr{DANTE}{DANTE}
\abbr{GUST}{GUST}
\abbr{GUT}{GUTenberg}
\abbr{TUG}{TUG}
\abbr{BS}{\char92}
\abbr{TFTOPL}{TFtoPL}
\abbr{PDFETEX}{pdf\<ETEX>}
\abbr{ZLIB}{ZLIB}
\abbr{LIBPNG}{LIBPNG}
\abbr{LIBTIFF}{LIBTIFF}
\abbr{XPDF}{XPDF}
\abbr{MIRKA}{Miroslava Mis\'akov\'a}
\abbr{PERL}{Perl}
\abbr{MMINSTANCE}{MMInstance}
\abbr{PERL}{Perl}
\abbr{LINUX}{Linux}
\abbr{PASCAL}{Pascal}

\ifx\newcommand\undefined
    \let\next=\relax
\else
    \let\next= 
\fi
\next

\def\makeatletter{\catcode`\@11\relax}
\def\makeatother{\catcode`\@12\relax}
\makeatletter
\newcount\@tempcnta
\newcount\@tempcntb
\newif\if@tempswa
\newdimen\@tempdima
\newdimen\@tempdimb
\newdimen\@tempdimc
\newbox\@tempboxa
\newskip\@tempskipa
\newskip\@tempskipb
\newtoks\@temptokena
\makeatother

\def\setprotcode#1{%
    \rpcode#1`\!=200
    \rpcode#1`\,=330
    \rpcode#1`\-=330
    \rpcode#1`\.=330
    \rpcode#1`\;=500
    \rpcode#1`\:=500
    \rpcode#1`\?=200
    \lpcode#1`\`=700
    \rpcode#1`\'=700
    \lpcode#1 92=500  % ``
    \rpcode#1 34=500  % ''
    \rpcode#1 123=300 % --
    \rpcode#1 124=200 % ---
    \rpcode#1`\)=50
    \rpcode#1`\A=50
    \rpcode#1`\F=50
    \rpcode#1`\K=50
    \rpcode#1`\L=50
    \rpcode#1`\T=50
    \rpcode#1`\V=50
    \rpcode#1`\W=50
    \rpcode#1`\X=50
    \rpcode#1`\Y=50
    \rpcode#1`\k=50
    \rpcode#1`\r=50
    \rpcode#1`\t=50
    \rpcode#1`\v=50
    \rpcode#1`\w=50
    \rpcode#1`\x=50
    \rpcode#1`\y=50
    \lpcode#1`\(=50
    \lpcode#1`\A=50
    \lpcode#1`\J=50
    \lpcode#1`\T=50
    \lpcode#1`\V=50
    \lpcode#1`\W=50
    \lpcode#1`\X=50
    \lpcode#1`\Y=50
    \lpcode#1`\v=50
    \lpcode#1`\w=50
    \lpcode#1`\x=50
    \lpcode#1`\y=50
    \adjustprotcode#1
}

\makeatletter
\newif\ifneedadjustprotcode
\def\adjustprotcode#1{%
    \needadjustprotcodefalse
    \ifnum\pdftexversion > 13
        \ifnum \expandafter`\pdftexrevision > `g
            \needadjustprotcodetrue
        \fi 
    \else\ifnum\pdftexversion > 14
        \needadjustprotcodetrue
    \fi\fi
    \ifneedadjustprotcode
        \@tempcnta=0
        \loop
            \ifcase\lpcode#1\@tempcnta\else
                \adjustcp\lpcode#1\@tempcnta
            \fi
            \ifcase\rpcode#1\@tempcnta\else
                \adjustcp\rpcode#1\@tempcnta
            \fi
            \advance\@tempcnta 1
        \ifnum\@tempcnta < 256 \repeat
    \fi
}
\def\adjustcp#1#2#3{%
    \setbox0=\hbox{%
        \ifx#2\font\else#2\fi
        \char#3}%
    \@tempcntb=\wd0
    \mul\@tempcntb #1#2#3%
    \div\@tempcntb \fontdimen6 #2%
    #1#2#3=\@tempcntb
}
\makeatother

\fi

\swapnumbers
\theoremstyle{plain}
\newtheorem{theorem}{Theorem}[section]
\newtheorem{lemma}[theorem]{Lemma}

\theoremstyle{remark}
\newtheorem{remark}[theorem]{Remark}
\newtheorem{example}[theorem]{Example}

\theoremstyle{definition}
\newtheorem{definition}[theorem]{Definition}
\newtheorem{miniremark}[theorem]{}

\swapnumbers
\newtheoremstyle{citing}% name
  {3pt}%      Space above, empty = `usual value'
  {3pt}%      Space below
  {\itshape}% Body font
  {}%         Indent amount (empty = no indent, \parindent = para indent)
  {\bfseries}% Thm head font
  {.}%        Punctuation after thm head
  {.5em}%     Space after thm head: " " = normal interword space;
        %       \newline = linebreak
  {\thmnote{#3}}% Thm head spec

\theoremstyle{citing}
\newtheorem*{citing}{}

\theoremstyle{definition}
\newtheorem{question}{Question}

\newcounter{acerbi}
\newcounter{fusco}

\newcommand{\class}[1]{\mathcal{C}^{#1}}
\newcommand{\classification}[3]{\left \{ {#2} \in {#1} \with {#3} \right \}}

\newcommand{\Lploc}[1]{L_{\mathrm{loc}}^{#1}}

\newcommand{\oball}[3]{B^{#1}_{#3} ( #2 )}
\newcommand{\cball}[3]{\bar{B}^{#1}_{#3} ( #2 )}
\newcommand{\nat}{\mathbb{N}}
\newcommand{\integers}{\mathbb{Z}}
\newcommand{\rel}{\mathbb{R}}
\newcommand{\grass}[2]{G(#1,#2)}

\newcommand{\project}[1]{#1}
\newcommand{\eqproject}[1]{#1}
\newcommand{\pluslim}[1]{\downarrow {#1}}

\newcommand{\density}{\theta}
\newcommand{\card}{\#}
\newcommand{\unitmeasure}[1]{\omega_{#1}}
\newcommand{\besicovitch}[1]{N(#1)}
\newcommand{\isoperimetric}[1]{\gamma_{#1}}

\newcommand{\Clos}[1]{\overline{#1}}

\newcommand{\union}[2]{{\textstyle\bigcup_{#2}} #1}

\newcommand{\intersection}[2]{{\textstyle\bigcap_{#2}} #1}

\newcommand{\alt}[2]{#2}

\newcommand{\measureball}[2]{{#1}({#2})}
\newcommand{\Lp}[1]{L^{#1}}
\newcommand{\lIm}{(}
\newcommand{\rIm}{)}

\newcommand{\cube}[3]{Q_{#3}^{#1} (#2)}

\newcommand{\vdim}{n}
\newcommand{\codim}{m}
\newcommand{\adim}{{n+m}}

\newcommand{\curv}{\psi}
\newcommand{\hoelder}{\alpha}

\newcommand{\ud}{\ensuremath{\,\mathrm{d}}}
\newcommand{\printRoman}[1]{\setcounter{acerbi}{#1}\Roman{acerbi}}

\DeclareMathOperator{\without}{\sim}

\DeclareMathOperator{\tiltex}{tiltex}
\DeclareMathOperator{\heightex}{heightex}
\DeclareMathOperator{\ap}{ap}

\DeclareMathOperator{\spt}{spt}
\DeclareMathOperator{\graph}{graph}

\DeclareMathOperator{\with}{:}

\DeclareMathOperator{\dmn}{dmn}
\DeclareMathOperator{\dist}{dist}
\DeclareMathOperator{\Hom}{Hom}
\DeclareMathOperator{\Div}{div}
\newcommand{\restrict}{\mathop{\llcorner}}

\begin{document}

\ifpdf
\setprotcode \font
{\it \setprotcode \font}
{\bf \setprotcode \font}
{\bf \it \setprotcode \font}
\pdfprotrudechars=1
\fi

\author{Ulrich Menne}
\thanks{The author acknowledges financial support via
the DFG Forschergruppe 469.}
\thanks{\textit{AEI publication number.} AEI-2008-063.}
\address{Max-Planck-Institut f\"ur Gravitationsphysik (Albert-Einstein-Institut) \\ Am M{\"u}h\-len\-berg 1 \\ D-14476 Golm \\ Deutschland}
\email{Ulrich.Menne@aei.mpg.de}
\title[Some Applications of the isoperimetric Inequality]{Some Applications of the isoperimetric Inequality for integral Varifolds}
\date{\today}
\begin{abstract}
	In this work the Isoperimetric Inequality for integral varifolds is
	used to obtain sharp estimates for the size of the set where the
	density quotient is small and to generalise Calder\'on's and Zygmund's
	theory of first order differentiability for functions in Lebesgue
	spaces from Lebesgue measure to integral varifolds.
\end{abstract}
\subjclass[2000]{Primary 49Q15; Secondary 26B35}

\maketitle

\tableofcontents

\section*{Introduction}
In this work, and also in the work in \cite{snulmenn:poincare} and
\cite{snulmenn:c2} depending on it, weak notions of regularity for integral
varifolds in an open subset of Euclidian space whose distributional first
variation is given by either a Radon measure or a locally to the $p$-th power
summable function, $1 < p \leq \infty$, are investigated. As it is well
known, see e.g. \cite[8.1\,(2)]{MR0307015}, even in the second case with
$p=\infty$ the singular set where the support does not locally correspond to a
submanifold of class $\class{1}$ may have positive measure. Therefore the
notions of regularity studied here are decay rates of height-excess and
tilt-excess which provide a way to quantify the amount of flatness entailed by
the conditions on the mean curvature near almost every point.

Next, in order to precisely state the problem and the results, some
definitions will be recalled. Suppose throughout the introduction that $\vdim,
\codim \in \nat$, $U$ is an open subset of $\rel^\adim$, and $\mu$ is an
integral $\vdim$ varifold in $U$, i.e., using \cite[Theorem 11.8]{MR87a:49001}
as a definition, $\mu$ is a Radon measure on $U$ and for $\mu$ almost all $x
\in U$ there exists an approximate tangent plane $T_x \mu \in \grass{\adim
}{\vdim}$ with multiplicity $\density^\vdim ( \mu, x ) \in \nat$ of $\mu$ at
$x$, $\grass{\adim}{\vdim}$ denoting the set of $\vdim$ dimensional,
unoriented planes in $\rel^\adim$. The distributional first variation of mass
of $\mu$ equals
\begin{gather*}
        ( \delta \mu ) ( \eta ) = {\textstyle\int} \Div_\mu \eta \ud \mu \quad
        \text{whenever $\eta \in \alt{\mathcal{D}}{C_\mathrm{c}^1} ( U,
        \rel^\adim )$}
\end{gather*}
where $\Div_\mu \eta (x)$ is the trace of $D\eta(x)$ with respect to $T_x
\mu$. $\| \delta \mu \|$ denotes the total variation measure associated to
$\delta \mu$ and $\mu$ is said to be of locally bounded first variation if and
only if $\| \delta \mu \|$ is a Radon measure, in this case the generalised
mean curvature vector $\vec{\mathbf{H}}_\mu (x) \in \rel^\adim$ can be defined
by the requirement
\begin{gather*}
	\vec{\mathbf{H}}_\mu (x) \bullet v = - \lim_{\varrho \pluslim{0}}
	\frac{( \delta \mu ) ( \chi_{\oball{}{x}{\varrho}} v
	)}{\measureball{\mu}{\oball{}{x}{\varrho}}} \quad \text{for $v \in
	\rel^\adim$}
\end{gather*}
whenever these limits exist for $x \in U$; here $\bullet$ denotes the usual
inner product on $\rel^\adim$. Moreover, $\mu$ is said to satisfy
\eqref{eqn_hp}, $1 \leq p \leq \infty$, if and only if it is of locally
bounded first variation, $\vec{\mathbf{H}}_\mu \in \Lploc{p} ( \mu,
\rel^\adim)$, and, in case $p > 1$, satisfies
\begin{gather} \label{eqn_hp}
	( \delta \mu ) ( \eta ) = - {\textstyle\int} \vec{\mathbf{H}}_\mu
	\bullet \eta \ud \mu \quad \text{whenever $\eta \in
	\alt{\mathcal{D}}{C_\mathrm{c}^1} ( U, \rel^\adim  )$} \tag{$H_p$}.
\end{gather}
Also, adapting Anzellotti's and Serapioni's definition in \cite{MR1285779},
$\mu$ is called countably rectifiable of class $\mathcal{C}^2$, or for short
$\mathcal{C}^2$ rectifiable, if and only if $\mu$ almost all of $U$ can be
covered by a countable collection of $\vdim$ dimensional submanifolds of class
$\class{2}$. The notation follows \cite{MR87a:49001} which includes a list of
basic notation on page (vii).

The following questions arise.
\begin{question}
	Suppose $\vdim, \codim \in \nat$, $1 \leq p \leq \infty$, $0 < \alpha
	\leq 1$, and $1 \leq q \leq \infty$. Does the condition \eqref{eqn_hp}
	on an integral $\vdim$ varifold $\mu$ in $U$, $U$ a nonempty, open
	subset of $\rel^\adim$, imply
	\begin{gather*}
		\limsup_{\varrho \pluslim 0} \varrho^{-1-\alpha-\vdim/q}
		\| \dist ( \cdot - x, T_x \mu ) \|_{\Lp{q} ( \mu \restrict
		\oball{}{x}{\varrho})} < \infty \quad \text{for $\mu$ almost
		all $x \in U$?}
	\end{gather*}
\end{question}
\begin{question}
	Suppose $\vdim, \codim \in \nat$, $1 \leq p \leq \infty$, $0 < \alpha
	\leq 1$, and $1 \leq q \leq \infty$. Does the condition \eqref{eqn_hp}
	on an integral $\vdim$ varifold $\mu$ in $U$, $U$ a nonempty, open
	subset of $\rel^\adim$, imply
	\begin{gather*}
		\limsup_{\varrho \pluslim 0} \varrho^{-\alpha-\vdim/q}
		\| T_\cdot \mu - T_x \mu \|_{\Lp{q} ( \mu \restrict
		\oball{}{x}{\varrho})} < \infty \quad \text{for $\mu$ almost
		all $x \in U$?}
	\end{gather*}
	Here $S \in \grass{\adim}{\vdim}$ is identified with the element of
	$\Hom ( \rel^\adim, \rel^\adim )$ given by the orthogonal projection
	of $\rel^\adim$ onto $S$.
\end{question}
Clearly, the two questions are related by Caccioppoli type inequalities, see
e.g. in \cite[5.5]{MR485012}, at least in the case $q=2$ where the quantities
considered agree with the classical tilt and height excess. Also note that an
affirmative answer to one of the questions with $\alpha$, $q$ implies an
affirmative answer to the same question for any $0 < \alpha' \leq 1$, $q < q'
< \infty$ such that $\alpha q = \alpha ' q '$ by use of the trivial
$\Lp{\infty}$ bounds of the functions involved. The case $\alpha = 1$ is of
particular interest in both questions. A varifold satisfying the decay
estimate in the first question with $\alpha=1$ and $q=1$ is $\mathcal{C}^2$
rectifiable, see \cite[Appendix A]{rsch:willmore}. In the second question the
case $\alpha = 1$ is related to the local computability of the mean curvature
vector from the geometry of $\classification{U}{x}{\density^\vdim ( \mu, x )
\geq 1}$, see \cite[Lemma 6.3]{MR1906780} (or \cite[Prop.  6.1]{MR2064971} or
\cite[Theorem 4.1]{rsch:willmore}). On the other hand the quantity $\alpha q$
to some extend determines how well $\mu$ can be approximated by multivalued
graphs near generic points, see Almgren \cite[Chapter 3]{MR1777737} and Brakke
\cite[Chapter 5]{MR485012} and also the forthcoming paper
\cite{snulmenn:poincare}. Such kind of approximation has been fundamental for
regularity investigations, for example, in the work of Almgren in
\cite{MR1777737}.

Next, an overview of results concerning these two questions will be given.
Brakke answers both questions in the affirmative for any $\vdim, \codim \in
\nat$ if
\begin{gather*}
	\text{either} \quad \text{$p=1$, $\alpha=1/2$, $q=2$} \qquad \text{or}
	\quad \text{$p=2$, $\alpha < 1$, $q=2$}
\end{gather*}
in \cite[5.7]{MR485012}. Sch{\"a}tzle provides a positive answer in the case
\begin{gather*}
	\text{$m=1$, $p > \vdim$, $p \geq 2$, $\alpha = 1$, $q = \infty$}
\end{gather*}
for the first question and in the case
\begin{gather*}
	\text{$m=1$, $p > \vdim$, $p \geq 2$, $\alpha = 1$, $q = 2$}
\end{gather*}
for the second question, see \cite[Prop. 4.1, Thm. 5.1]{MR2064971}. Moreover,
in subsequent work Sch{\"a}tzle showed for arbitrary dimensions that the decay
rates occuring in the two questions hold if
\begin{gather*}
	\text{$p=2$, $\alpha=1$, $q=2$}
\end{gather*}
provided $\mu$ is additionally assumed to be $\mathcal{C}^2$ rectifiable.

In this paper, it is shown by an example of a unit density, $\mathcal{C}^2$
rectifiable $\vdim$ varifold in $\rel^{\vdim+1}$ that the answers to both
questions is in the negative if $p < \vdim$ and $\alpha q > \frac{\vdim
p}{\vdim - p}$, see \ref{example:scaling}. In particular, in case $1 \leq p <
\frac{2 \vdim}{\vdim + 2}$ proving appropriate decay for the classical
height-excess or tilt-excess, i.e. answering the first or second question in
the affirmative for $\alpha = 1$, $q = 2$, cannot serve as
an intermediate step in studying $\mathcal{C}^2$ rectifiability or local
computability of the mean curvature vector. This was the original motivation
to consider exponents $q \neq 2$.

In order to provide new cases where the questions are answered in the
affirmative, it will turn out to be useful in
\cite{snulmenn:poincare,snulmenn:c2} to have a theory of first order
differentiability for functions in $\Lp{p} ( \mu )$, $\mu$ an integral $\vdim$
varifold, similar to the one developed by Calder\'on and Zygmund in
\cite{MR0136849} for $\Lp{p} ( \mathcal{L}^\vdim )$, at one's disposal.
However, other kinds of applications may occur in the future. The key to carry
over this theory from the Lebesgue measure case to the case of integral
varifolds is the following differentiation theorem which corresponds to
\cite[Theorem 10\,(ii)]{MR0136849} but whose proof uses techniques employed by
Federer in \cite[2.9.17]{MR41:1976}.
\begin{citing} [Theorem \ref{thm:big_o_little_o}]
	Suppose $\vdim, \codim \in \nat$, $1 \leq p \leq \vdim$, $U$ is an
	open subset of $\rel^\adim$, $\mu$ is an integral $\vdim$ varifold in
	$U$ satisfying \eqref{eqn_hp}, $\nu$ measures $U$ with $\nu ( U
	\without \spt \mu ) = 0$, $A$ is $\mu$ measurable with $\nu (A) = 0$,
	and $1 \leq q < \infty$. In case $p < \vdim $ additionally suppose for
	some $1 \leq r \leq \infty$ and some nonnegative function $f \in
	\Lp{r}_\mathrm{loc} ( \mu)$ that
	\begin{gather*}
		\nu = f \mu \quad \text{and} \quad q \leq 1 + ( 1 - 1/r )
		\frac{p}{\vdim -p}.
	\end{gather*}

	Then for $\mathcal{H}^\vdim $ almost all $a \in A$
	\begin{gather*}
		\limsup_{s \pluslim{0}} \nu ( \cball{}{a}{s} ) \big / s^{\vdim
		q} \quad \text{equals either $0$ or $\infty$}.
	\end{gather*}
\end{citing}
The bound on $q$ is sharp as demonstrated in \ref{remark:precise1},
\ref{remark:precise2}. Its occurance is due to the fact that in case $p <
\vdim$ the number $\vdim^2/(\vdim-p)$ in the following proposition cannot be
replaced by any larger number, see \ref{example:scaling}: \emph{Suppose
$\vdim, \codim \in \nat$, $1 \leq p < \vdim$, $\mu$ is an integral $\vdim$
varifold in $\rel^\adim$ satisfying \eqref{eqn_hp}, then for $\mu$ almost all
$a \in U$ there exists $\varepsilon
> 0$ such that $$\lim_{r \pluslim{0}} \frac{\mu \big ( \cball{}{a}{r} \without
\{ x \with \text{$\measureball{\mu}{\cball{}{x}{\varrho}} \geq c_\vdim
\varrho^\vdim$ for $0 < \varrho < \varepsilon$} \}
\big)}{r^{\vdim^2/(\vdim-p)}} = 0$$ where $c_\vdim$ is a positive, finite
number depending only on $\vdim$}, see \ref{thm:structure_lower_density},
\ref{remark:some_explanations}. Similar propositions with $\vdim^2/(\vdim-p)$
replaced by any slighly smaller number can be obtained via integration of the
monotonicity formula, see \cite[Theorem 17.6]{MR87a:49001}.  The optimal bound
is derived using the Isoperimetric Inequality. All these results will be
proved under the weaker condition $\density^\vdim ( \mu, x ) \geq 1$ for $\mu$
almost every $x \in U$ replacing the integrality condition on $\mu$.

The work is organised as follows. In the first section the example is
constructed. In the second section the Isoperimetric Inequality is used to
derive some sharp bounds on the size of the set where the $\vdim$ density
ratio is small and in the last section a theory of first order differentiation
in Lebesgue spaces defined with respect to a varifold is presented. Note that
with the exception of \ref{def:fint}--\ref{remark:final} the work was part of
the author's PhD thesis, see \cite{mydiss}.

For notation of geometric measure theory see \cite{MR87a:49001,MR41:1976}.

\textbf{Acknowledgements.} The author offers his thanks to Professor Reiner
Sch\"atz\-le under whose guidance the underlying PhD thesis was written. The
author also would like to thank Professor Tom Ilmanen for inviting him to the
ETH Z\"urich in 2006.

\section{An example concerning height and tilt decays of integral varifolds} \label{app:example}
In this section a family of integral $\vdim$ varifolds with prescribed decay
rates of height and tilt quantities is constructed. In fact, the decay rate
for tilt can be arranged to be slightly larger than the one of the height with
the same exponent. However, this feature that will only become relevant in
\cite{snulmenn:poincare}.
\begin{definition}
	Suppose $x \in \rel^\adim $ and $0 < \varrho < \infty$.

	Then $\cube{}{x}{\varrho} := \{ y \in \rel^\adim  \with \text{$| y_i -
	x_i | \leq \varrho$ for $i = 1, \ldots, \adim $}
	\}$\index{$\cube{}{x}{\varrho}$}. To avoid ambiguity, $\cube{\adim
	}{0}{\varrho}$ will be written instead of $\cube{}{0}{\varrho}$.
\end{definition}
\begin{example} \label{example:scaling}
	Suppose $\vdim  \in \nat$, $1 \leq p < \vdim $, $0 < \hoelder_i \leq 1$,
	$1 \leq q_i < \infty$ for $i \in \{ 1, 2 \}$, such that
	\begin{gather*}
		\hoelder_2 q_2 \leq \hoelder_1 q_1, \quad \frac{1}{p} > 1 +
		\frac{\hoelder_2 q_2}{\hoelder_1 q_1} \Big ( \frac{1}{n} +
		\frac{1}{\hoelder_2 q_2} - 1 \Big )
	\end{gather*}
	In case $\hoelder_1 q_1 = \hoelder_2 q_2$ the last condition reads $\hoelder_2
	q_2 > \frac{\vdim p}{\vdim-p}$.

	Then there exists a rectifiable $\vdim $ varifold $\mu$ in
	$\rel^{\vdim +1}$, $T \in G(\vdim +1,\vdim )$ and $0 < \Gamma <
	\infty$ with the following properties:
	\begin{enumerate}
		\item \label{item:scaling:analytic} $T \subset \spt \mu$
		and $(\spt \mu) \without T$ is an $\vdim $ dimensional
		manifold of class \alt{$\infty$}{$\mathcal{C}^\infty$}.
		\item \label{example:scaling:basic} $\density^\vdim  ( \mu, x
		) = 1$ for $x \in \spt \mu$ and $T_x \mu = T$ for $x \in T$.
		\item \label{example:scaling:delta_mu} For some
		$\vec{\mathbf{H}}_\mu \in \Lp{p}_\mathrm{loc} ( \mu,
		\rel^{\vdim +1} )$ there holds $(\delta\mu) ( \eta ) = - \int
		\vec{\mathbf{H}}_\mu \bullet \eta \ud \mu$ whenever $\eta \in
		\alt{\mathcal{D}}{C_\mathrm{c}^1} ( \rel^{\vdim +1},
		\rel^{\vdim +1})$.
		\item \label{item:scaling:estimates} Whenever $x \in T$ and $0
		< \varrho \leq 1$
		\begin{gather*}
			\Gamma^{-1} \varrho^{\hoelder_2 q_2} \leq \varrho^{-\vdim }
			\mu ( \{ \xi \in \cball{}{x}{\varrho} \with \dist (
			\xi - x , T ) \geq \varrho/\Gamma \} ), \\
			\varrho^{-\vdim } \mu ( \cball{}{x}{\varrho} \without
			T ) \leq \Gamma \varrho^{\hoelder_2 q_2}, \\
			\varrho^{-1-\vdim /q_2} \big (
			{\textstyle\int_{\cball{}{x}{\varrho}}} \dist ( \xi -
			x, T_x \mu )^{q_2} \ud \mu ( \xi ) \big)^{1/q_2} \approx
			\varrho^{\hoelder_2}, \\
			\varrho^{-\vdim /q_1} \big (
			{\textstyle\int_{\cball{}{x}{\varrho}}} |
			\eqproject{T_\xi\mu} - \eqproject{T_x \mu} |^{q_1} \ud
			\mu ( \xi ) \big )^{1/q_1} \approx
			\varrho^{\hoelder_1},
		\end{gather*}
		here $a \approx b$ means that $a \leq \Gamma_1 b$ and $b \leq
		\Gamma_1 a$ for some positive, finite number $\Gamma_1$
		depending only on $\vdim $, and $\hoelder_i$, $q_i$ for $i \in \{
		1,2 \}$.
		\item \label{example:scaling:last} Whenever $1 < r < \infty$,
		$\vdim  + ( 1 - 1/r ) \hoelder_2 q_2 < s < \infty$ there
		exists a nonnegative function $f \in \Lp{r}_\mathrm{loc} (
		\mu)$ such that $f(x) = 0$ for $x \in T$, and
		\begin{gather*}
			\varrho^s \approx
			{\textstyle\int_{\cball{}{x}{\varrho}}} f \ud \mu
			\quad \text{whenever $x \in T$, $0 < \varrho \leq 1$},
		\end{gather*}
		here $a \approx b$ means $a \leq \Gamma_2b$ and $b \leq
		\Gamma_2 a$ for some positive, finite number $\Gamma_2$
		depending only on $\vdim $ and $s$.
	\end{enumerate}
\end{example}
\begin{proof} [Construction of example]
	Let $a := \hoelder_2 q_2 / n + 1$, $b := ( \hoelder_1q_1- \hoelder_2 q_2 ) / a
	+ 1 \geq 1$. Define for $i \in \nat_0$
	\begin{gather*}
		W_i := \big \{ \cube{}{x}{2^{-i-2}} \with 2^{i+1} x \in
		\integers^\vdim  \big \}.
	\end{gather*}
	Clearly, $\union{\Clos{Q}}{Q \in W_i} = \rel^\vdim $ and $W_i$ is
	\alt{disjointed}{pairwise disjoint}. Let
	\begin{gather*}
		F_i := \big \{ ] 2^{-i-1}, 2^{-i} [ \times W \with W \in W_i
		\big \} \quad \text{for $i \in \nat_0$}, \quad F :=
		\union{F_i}{i \in \nat_0}.
	\end{gather*}
	Clearly, $\union{\Clos{S}}{S \in F} = ]0,1] \times \rel^\vdim $ and
	$F$ is \alt{disjointed}{pairwise disjoint}. Let $T := \{ 0 \} \times
	\rel^\vdim $.

	Next, it will be indicated how to construct for every $0 < \sigma
	\leq \varrho < \infty$ a compact $\vdim $ dimensional submanifold $M$ of
	$\rel^{\vdim +1}$ of class \alt{$\infty$}{$\mathcal{C}^\infty$} such
	that
	\begin{gather*}
		M \subset \cube{\vdim +1}{0}{\varrho}, \quad (\Gamma_0)^{-1} \varrho^\vdim
		\leq \mathcal{H}^\vdim  (M) \leq \Gamma_0 \varrho^\vdim , \quad |
		\vec{\mathbf{H}}_M | \leq \Gamma_0 \sigma^{-1}, \\
		\mathcal{H}^\vdim  ( \{ x \in M \with | T_x M - T | \geq 1 \}
		) \geq (\Gamma_0)^{-1} \sigma \varrho^{\vdim -1}, \\
		\mathcal{H}^\vdim  ( \{ x \in M \with
		\text{$\vec{\mathbf{H}}_M (x) \neq 0$ or $T_x M \neq T$} \} )
		\leq \Gamma_0 \sigma \varrho^{\vdim -1}
	\end{gather*}
	where $\Gamma_0$ is a positive, finite number depending only on $\vdim
	$.  To construct $M$, one may assume $\varrho=1$. Choose a concave function
	$f : [-1/2,1/2] \to [0,1]$ and $0 < \Gamma_1 < \infty$ such that
	\begin{gather*}
		f ( -1/2 ) = \sigma/4 = f (1/2), \\
		f (s) = \sigma/2 \quad \text{whenever $s \in
		[-1/2+\sigma/4,1/2-\sigma/4]$}
	\end{gather*}
	and such that
	\begin{gather*}
		N := \{ (s,t) \in [-1/2,1/2] \times \rel \with |t|=f(s) \}
		\cup (\{-1/2,1/2\} \times [-\sigma/4,\sigma/4])
	\end{gather*}
	is a $1$ dimensional submanifold of class
	\alt{$\infty$}{$\mathcal{C}^\infty$} with $| \vec{\mathbf{H}}_N | \leq
	\Gamma_1 \sigma^{-1}$. Noting
	\begin{gather*}
		\mathcal{H}^1 ( \alt{}{\graph} f | [-1/2,-1/2+\sigma/4] \cap
		[ 1/2-\sigma/4, 1/2] ) \leq \sigma,
	\end{gather*}
	one can take
	\begin{gather*}
		M := \{ ( y,z ) \in \rel \times \rel^\vdim  \with ( |z|, y )
		\in N \}.
	\end{gather*}

	For each $i \in \nat_0$ and $Q \in F_i$ choose a $\vdim $ dimensional
	submanifold $M_Q$ of the type just constructed corresponding to
	$\varrho_i := 2^{-i a-2}$, $\sigma_i := 2^{-i b a -2}$ contained in
	$Q$ and let $M$ be the union those submanifolds. Take $\mu :=
	\mathcal{H}^\vdim  \restrict ( T \cup M )$.
	\eqref{item:scaling:analytic} is now evident.

	To prove the estimates, fix $x \in T$ and define for $i,j \in \nat_0$
	\begin{gather*}
		b_{i,j} := \card \big \{ Q \in F_j \with Q \cap
		\cube{}{x}{2^{-i}} \neq \emptyset \big \}, \alt{\\}{\quad}
		c_{i,j} := \card \big \{ Q \in F_j \with Q \subset
		\cube{}{x}{2^{-i}} \big \}.
	\end{gather*}
	Clearly, $b_{i,j} = c_{i,j} = 0$ if $j < i$. If $j \geq i$, one
	estimates
	\begin{gather*}
		b_{i,j} \leq \big ( 2^{j-i+2} + 1 \big )^\vdim  \leq \big ( 5
		\cdot 2^{j-i} \big )^\vdim , \quad c_{i,j} \geq \big (
		2^{j-i+2} - 1 )^\vdim \geq \big ( 3 \cdot 2^{j-i} \big )^\vdim
		.
	\end{gather*}

	One calculates
	\begin{gather*}
		\mu ( \cube{}{x}{2^{-i}} \without T ) \leq \sum_{j=0}^\infty
		b_{i,j} \Gamma_0 ( \varrho_j )^\vdim  \leq (5/4)^\vdim  \Gamma_0
		(2^{-i})^{a \vdim } ( 1 - 2^{\vdim (1-a)})^{-1}, \\
		n-ba(1-p)+(1-n)a = - \hoelder_1 q_1 + p ( \hoelder_1 q_1 - \hoelder_2
		q_2 + \hoelder_2 q_2/n+1 ) < 0, \\
		{\textstyle\int_{\cube{}{x}{2^{-i}}\without T}} |
		\vec{\mathbf{H}}_M |^p \ud \mu \leq \sum_{j=0}^\infty b_{i,j}
		( \Gamma_0 )^{p+1} ( \sigma_j )^{1-p} ( \varrho_j )^{\vdim -1}
		\\
		\leq 5^\vdim  ( \Gamma_0 )^{\vdim +1} ( 2^{-i} )^{b a (
		1 - p ) + \vdim  - 1} ( 1 - 2^{\vdim -ba(1-p)+(1-\vdim
		)a} )^{-1} < \infty, \\
		{\textstyle\int_{\cube{}{x}{2^{-i}}}} \dist ( \xi - x, T
		)^{q_2} \ud \mu ( \xi ) \leq 2^{-iq_2} \mu ( \cube{}{x}{2^{-i}} \without
		T ), \\
		{\textstyle\int_{\cube{}{x}{2^{-i}}}} | \eqproject{T_\xi \mu}
		- \project{T} |^{q_1} \ud \mu ( \xi ) \leq (2\vdim )^{q_1}
		\sum_{j=0}^\infty b_{i,j} \Gamma_0 \sigma_j (
		\varrho_j)^{\vdim -1} \\
		\leq (2\vdim )^{q_1} (5/4)^\vdim  \Gamma_0 ( 2^{-i} )^{b a+a
		(\vdim -1)} ( 1 - 2^{\vdim -ba-a(\vdim -1)})^{-1}, \\
		2^{(i+1)q_2}
		{\textstyle\int_{\cube{}{x}{2^{-i}}}} \dist ( \xi - x , T
		)^{q_2}
		\ud \mu ( \xi ) \\ \geq \mu ( \{ \xi \in \cube{}{x}{2^{-i}}
		\with \dist ( \xi - x, T ) \geq 2^{-i-1} \} ) \geq
		(\Gamma_0)^{-1} ( \varrho_i )^\vdim  = ( 4^\vdim  \Gamma_0)^{-1}
		2^{-ia \vdim }, \\
		{\textstyle\int_{\cube{}{x}{2^{-i}}}} | \eqproject{T_\xi \mu}
		- \project{T} |^{q_1} \mu ( \xi ) \geq ( \Gamma_0 )^{-1} \sigma_i
		( \varrho_i)^{\vdim -1} = ( 4^\vdim  \Gamma_0 )^{-1} (
		2^{-i})^{ab+a(\vdim -1)}.
	\end{gather*}
	Therefore \eqref{example:scaling:delta_mu} and
	\eqref{item:scaling:estimates} are proved and the first estimate of
	\eqref{item:scaling:estimates} implies
	\eqref{example:scaling:basic}.
	
	To prove \eqref{example:scaling:last}, define $f$ by $f(y) :=
	2^{(\vdim a-s)i}$ if $y \in \alt{\bigcup F_i}{\bigcup_{S \in F_i}
	S}$ for some $i \in \nat_0$ and $f(y) = 0$ else. Then for $i \in
	\nat_0$
	\begin{gather*}
		{\textstyle\int_{\cube{}{x}{2^{-i}}}} |f| \ud \mu \leq
		\sum_{j=0}^\infty b_{i,j} 2^{(\vdim a - s)j} \Gamma_0
		(\varrho_j)^\vdim \leq (5/4)^\vdim  \Gamma_0 ( 2^{-i} )^s ( 1 -
		2^{\vdim -s} )^{-1}, \\
		{\textstyle\int_{\cube{}{x}{2^{-i}}}} |f|^r \ud \mu \leq
		\sum_{j=0}^\infty b_{i,j} 2^{(\vdim  a-s)rj} \Gamma_0 ( \varrho_j
		)^\vdim \\
		\leq (5/4)^\vdim  \Gamma_0 ( 2^{-i} )^{(s-\vdim a)r+\vdim
		a} ( 1 - 2^{\vdim +(\vdim a-s)r-\vdim a} )^{-1} <
		\infty
	\end{gather*}
	because
	\begin{gather*}
		n + (na-s)r-an = \hoelder_2 q_2 ( r-1 ) + r (n-s) < 0.
	\end{gather*}
	The estimate from below is similar to the one from above.
\end{proof}
\begin{remark}
	The integral $n$ varifold $\mu$ constructed depends only on $n$ and
	the products $\hoelder_i q_i$ for $i \in \{ 1, 2 \}$. Moreover, the
	assumption $\hoelder_i \leq 1$ for $i \in \{ 1, 2 \}$ could be
	replaced by $\hoelder_i < \infty$ for $i \in \{ 1, 2 \}$.
\end{remark}
\begin{remark} \label{remark:best_rates}
	Taking $p=1$, $\hoelder_1 = \hoelder_2$, and $q_1=q_2=2$ in the last
	two estimates of \eqref{item:scaling:estimates} shows that for every
	$\vdim \in \nat$, $\vdim > 1$,
		$1/2+(2(\vdim-1))^{-1} < \hoelder \leq 1$,
	there exists an integral $n$ varifold $\mu$ of $\rel^{\vdim + 1}$ of
	locally bounded first variation such that for some $A$ with $\mu (A) >
	0$
	\begin{gather*}
		\lim_{\varrho \pluslim{0}} \varrho^{-2\hoelder} \heightex_\mu
		( x, \varrho, T_x \mu ) = \infty, \quad \lim_{\varrho
		\pluslim{0}} \varrho^{-2\hoelder} \tiltex_\mu ( x, \varrho,
		T_x \mu ) = \infty
	\end{gather*}
	for $x \in A$. In \cite[5.7]{MR485012} Brakke showed in arbitrary
	codimension that the above limits equal $0$ almost everywhere with
	respect to $\mu$ if $\hoelder = 1/2$.
\end{remark}
\begin{remark} \label{remark:no_equiv_c2_tilt}
	Similarly to the preceding remark, taking $\hoelder_1 = \hoelder_2 =
	1$, $q_1 = q_2 = q$ and noting \eqref{item:scaling:analytic},
	one obtains for every $p^\ast = \frac{np}{n-p} < q < \infty$ an
	integral $n$ varifold $\mu$ satisfying \eqref{eqn_hp} which is
	countably rectifiable of class $\mathcal{C}^2$ such that for some $A$
	with $\mu (A) > 0$
	\begin{gather*}
		\lim_{\varrho \pluslim{0}} \varrho^{-2-n/q} \big (
		{\textstyle\int_{\oball{}{x}{\varrho}}} \dist ( \xi - x , T_x
		\mu )^q \ud \mu ( \xi ) \big )^{1/q} = \infty, \\
		\lim_{\varrho \pluslim{0}} \varrho^{-1-n/q} \big (
		{\textstyle\int_{\oball{}{x}{\varrho}}} | \eqproject{T_\xi
		\mu} - \eqproject{T_x \mu} |^q \ud \mu ( \xi ) \big )^{1/q} =
		\infty
	\end{gather*}
	for $x \in A$. In particular, if $p < \frac{2 \vdim}{\vdim +2}$ then
	countable rectifiability of class $\mathcal{C}^2$ does not imply
	quadratic decay of neither $\tiltex_\mu$ nor $\heightex_\mu$. If $p =
	2$, countable rectifiability of class $\mathcal{C}^2$ is equivalent to
	quadratic decay of both quantities, see \cite[Theorem
	3.1]{rsch:willmore}.
\end{remark}
\section{The size of the set where the $\vdim$ density quotient is small} \label{app:iso}
In this section the Isoperimetric Inequality is used to derive basic facts on
the size of the set where the $\vdim$ density quotient is small. Although the
general procedure of such estimates is clearly known, see
\ref{miniremark:good_point}, it appears to be rarely used in literature. The
sharpness of the results is necessary to determine the precise limiting
exponent up to which the differentiation theory in the next section can be
developed. Similarly, the accuracy of the bounds obtained in
\cite{snulmenn:poincare} depends on the results of this section.
\begin{miniremark} \label{miniremark:situation1}
	The following situation will be studied:

	\alt{$\vdim, \adim \in \nat$, $\vdim < \adim$,}{$\codim, \vdim \in
	\nat$,} $1 \leq p \leq \vdim$,
	$U$ is an open subset of $\rel^\adim$, $\mu$ is a rectifiable $\vdim$
	varifold\footnote{Note that a definition of a rectifiable $\vdim$
	varifold results from the definition of an integral $\vdim$ varifold
	through replacement of the condition $\density^\vdim ( \mu, x ) \in
	\nat$ by $0 < \density^\vdim ( \mu, x ) < \infty$.} in $U$ of locally
	bounded first variation, $\density^\vdim ( \mu, x ) \geq 1$ for $\mu$
	almost all $x \in U$, and, in case $p > 1$,
	\begin{gather*}
		( \delta \mu ) ( \eta) = - {\textstyle\int}
		\vec{\mathbf{H}}_\mu \bullet \eta \ud \mu \quad \text{whenever
		$\eta \in \alt{\mathcal{D}(U,\rel^\adim)}{C_\mathrm{c}^1 ( U,
		\rel^\adim )}$}
	\end{gather*}
	for some $\vec{\mathbf{H}}_\mu \in \Lp{p}_\mathrm{loc} ( \mu,
	\rel^\adim)$. In doing so, the following abbreviation will be used:
	\begin{gather*}
		\curv = \| \delta \mu \| \quad \text{if $p = 1$}, \quad
		\curv = | \vec{\mathbf{H}}_\mu |^p \mu \quad \text{else}.
	\end{gather*}
\end{miniremark}
\begin{theorem} [Isoperimetric Inequality for varifolds] \label{thm:basic_sobolev}
	Suppose \alt{$m, n \in \nat$, $m \leq n$,}{$\codim \in \nat_0$, $\vdim
	\in \nat$,} $\mu$ is a rectifiable $\vdim$ varifold in $\rel^\adim $
	with $\mu ( \rel^\adim  ) < \infty$ and $\| \delta \mu \| ( \rel^\adim
	) < \infty$.  

	Then for some positive, finite number $\gamma$ depending only on
	$\vdim $
	\begin{gather*}
		\mu \big ( \big \{ x \in \rel^\adim  \with \density^\vdim  (
		\mu, x ) \geq 1 \big \} \big ) \leq \gamma \, \mu ( \rel^\adim
		)^{1/\vdim } \| \delta \mu \| ( \rel^\adim  ).
	\end{gather*}
\end{theorem}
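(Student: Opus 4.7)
The plan is to deduce the statement from the sharp form of the Isoperimetric Inequality for rectifiable varifolds due to Allard, see e.g.\ \cite[Theorem~18.6]{MR87a:49001}: under the hypotheses of the present theorem, writing $A = \{ x \in \rel^\adim \with \density^\vdim ( \mu, x ) \geq 1 \}$ for brevity, one has
\[
	\mu ( A )^{(\vdim-1)/\vdim} \leq \gamma_0 \, \| \delta \mu \| ( \rel^\adim )
\]
for a positive, finite constant $\gamma_0 = \gamma_0 ( \vdim )$. Once this is at one's disposal, the asserted inequality follows by multiplying both sides by $\mu ( A )^{1/\vdim}$ and using the trivial estimate $\mu ( A )^{1/\vdim} \leq \mu ( \rel^\adim )^{1/\vdim}$, so one can take $\gamma = \gamma_0$.

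If one wished to record a more self-contained argument, the natural route is through the Michael--Simon Sobolev inequality: for every nonnegative, compactly supported Lipschitz function $h$ with $\{ h > 0 \} \subset A$,
\[
	\Bigl( {\textstyle\int} h^{\vdim/(\vdim-1)} \ud \mu \Bigr)^{(\vdim-1)/\vdim} \leq \gamma_1 {\textstyle\int} | \nabla^\mu h | \ud \mu + \gamma_1 {\textstyle\int} h \ud \| \delta \mu \|,
\]
where $\nabla^\mu h$ denotes the tangential gradient of $h$ along $\mu$ and $\gamma_1 = \gamma_1 ( \vdim )$. To recover the sharp form stated above one substitutes a suitable Lipschitz approximation $h_k$ of $\chi_A$ (obtained for instance by truncating $k \cdot \dist ( \cdot, \rel^\adim \without A )$ at $1$ on the underlying rectifiable set) and passes to the limit, verifying that the tangential gradient contribution on the right vanishes while the left-hand side tends to $\mu(A)^{(\vdim-1)/\vdim}$.

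The only genuinely delicate step in the self-contained route is the construction of the sequence $h_k$, which requires both the rectifiability of $\mu$ and the density lower bound on $A$ in order to guarantee that $\int | \nabla^\mu h_k | \ud \mu$ vanishes in the limit. In the present paper, however, this point is entirely subsumed by the appeal to \cite[Theorem~18.6]{MR87a:49001}, so the proof should reduce to little more than that citation followed by the elementary one-line computation described in the first paragraph.
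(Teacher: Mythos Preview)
Your proposal is correct and takes essentially the same approach as the paper: both reduce the statement to citations of the Allard/Michael--Simon isoperimetric inequality, with the paper citing \cite[Theorem~7.1]{MR0307015} and \cite[Lemma~18.7, Theorem~18.6]{MR87a:49001}. Your passage through the stronger form $\mu(A)^{(\vdim-1)/\vdim} \leq \gamma_0\,\|\delta\mu\|(\rel^\adim)$ followed by the trivial estimate $\mu(A)^{1/\vdim} \leq \mu(\rel^\adim)^{1/\vdim}$ is a clean way to arrive at the stated inequality; note only that the paper explicitly flags that \cite[Theorem~18.6]{MR87a:49001} requires ``a slight modification'' (the version there assumes $\density^\vdim(\mu,\cdot)\geq 1$ $\mu$-a.e.\ and an absolutely continuous first variation, neither of which is hypothesised here), a point you implicitly acknowledge in your third paragraph but gloss over in the first.
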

\begin{proof}
	This follows from \cite[Theorem 7.1]{MR0307015} with a constant
	$\gamma$ depending on $\adim $ (what would be sufficient for the purpose of
	this work). A slight modification of \cite[Lemma 18.7, Theorem
	18.6]{MR87a:49001} yields the stated result.
\end{proof}
\begin{definition} \label{def:isoperimetric}
	For $\vdim  \in \nat$ let $\isoperimetric{\vdim
	}$\index{$\isoperimetric{\vdim }$} denote the best constant $\gamma$
	in \ref{thm:basic_sobolev}.
\end{definition}
\begin{remark}
	Taking \alt{$\adim=\vdim$}{$\codim=0$}, $\mu = \mathcal{L}^\vdim
	\restrict \cball{\vdim }{0}{1}$ yields
	\begin{gather*}
		\isoperimetric{\vdim } \geq \unitmeasure{\vdim }^{-1/\vdim } /
		\vdim .
	\end{gather*}
	Does equality hold?
\end{remark}
\begin{miniremark} \label{miniremark:good_point} 
	An important consequence of the Isoperimetric Inequality
	\ref{thm:basic_sobolev} and the starting point for the estimates in
	the present section is the following fact which can be derived by a
	variant of \cite[5.1.6]{MR41:1976} or \cite[8.3]{MR0307015}, see
	\cite[Prop.  3.1]{leomas08} or \cite[A.8, A.9]{mydiss}.

	\emph{Suppose $\vdim$, $\codim$, $p=1$, $U = \oball{}{a}{r}$ for some
	$a \in \rel^\adim$ and $0 < r < \infty$, and $\mu$ are as in
	\ref{miniremark:situation1}, $a \in \spt \mu$, and $$\measureball{\|
	\delta \mu \|}{\cball{}{a}{\varrho}} \leq ( 2
	\isoperimetric{\vdim})^{-1} \mu ( \cball{}{a}{\varrho})^{1-1/\vdim}
	\quad \text{whenever $0 < \varrho < r$},$$ then
	$$\measureball{\mu}{\cball{}{a}{\varrho}} \geq ( 2 \vdim
	\isoperimetric{\vdim})^{-\vdim} \varrho^\vdim \quad \text{whenever $0
	< \varrho < r$}.$$} Also note, if $p=\vdim>1$ or $p=\vdim=1$ and $\|
	\delta \mu \| ( \{ a \}) < ( 2 \isoperimetric{\vdim} )^{-\vdim}$, then
	\begin{gather*}
		\measureball{\| \delta \mu \|}{\cball{}{x}{\varrho}} \leq ( 2
		\isoperimetric{\vdim} )^{-1} \mu ( \cball{}{x}{\varrho} )
		^{1-1/\vdim}
	\end{gather*}
	whenever $0 < \varrho < r$, $x \in \spt \mu \cap \cball{}{a}{r}$ is
	satisfied for all sufficiently small positive radii $r$.
\end{miniremark}
\begin{lemma} \label{lemma:lower_density_bound}
	Suppose $m, n \in \nat$,\alt{$m < n$,}{} and $\delta > 0$.

	Then there exists a positive number $\varepsilon$ with the following
	property.

	If $a \in \rel^\adim $, $0 < r < \infty$, \alt{$\vdim$,
	$\adim$,}{$\codim$, $\vdim$}, $p$, $U$, and $\mu$ are related as in
	\ref{miniremark:situation1} with $U = \oball{}{a}{r}$, $p=1$, $a \in
	\spt \mu$, and
	\begin{gather*}
		\measureball{\| \delta \mu \|}{\cball{}{a}{\varrho}} \leq
		(2\isoperimetric{\vdim })^{-1} \mu (
		\cball{}{a}{\varrho})^{1-1/\vdim } \quad \text{for $0 <
		\varrho < r$}, \\
		\measureball{\| \delta \mu \|}{\oball{}{a}{r}} \leq
		\varepsilon \, \mu ( \oball{}{a}{r} )^{1-1/\vdim },
	\end{gather*}
	then
	\begin{gather*}
		\measureball{\mu}{\oball{}{a}{r}} \geq ( 1 - \delta )
		\unitmeasure{\vdim } r^\vdim .
	\end{gather*}
\end{lemma}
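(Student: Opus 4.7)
The plan is to argue by contradiction and compactness, reducing to a stationary limit varifold on the open unit ball and closing via the monotonicity formula. Suppose the lemma fails for some fixed $\delta > 0$; then for each $k \in \nat$ there exist data $(a_k, r_k, \mu_k)$ satisfying the hypotheses with $\varepsilon$ replaced by $1/k$ but with $\measureball{\mu_k}{\oball{}{a_k}{r_k}} < (1-\delta)\unitmeasure{\vdim}r_k^\vdim$. After translating by $-a_k$ and dilating by $r_k^{-1}$ I obtain rectifiable $\vdim$ varifolds $\tilde\mu_k$ on $\oball{}{0}{1}$ for which the scale-invariant hypotheses persist (both $\| \delta \mu \|$ and $\mu^{1-1/\vdim}$ scale like $r^{\vdim-1}$), with $0 \in \spt\tilde\mu_k$ and $\measureball{\tilde\mu_k}{\oball{}{0}{1}} < (1-\delta)\unitmeasure{\vdim}$.

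Next I would combine \ref{miniremark:good_point}, which provides $\measureball{\tilde\mu_k}{\cball{}{0}{\varrho}} \geq (2\vdim\isoperimetric{\vdim})^{-\vdim}\varrho^\vdim$ for $0 < \varrho < 1$, with the uniform upper bound $\measureball{\tilde\mu_k}{\oball{}{0}{1}} < (1-\delta)\unitmeasure{\vdim}$ and with the control $\measureball{\|\delta\tilde\mu_k\|}{\oball{}{0}{1}} \leq k^{-1}((1-\delta)\unitmeasure{\vdim})^{1-1/\vdim} \to 0$ to invoke a compactness theorem in the spirit of Allard. A subsequence converges weakly (as varifolds and as Radon measures) to a stationary rectifiable $\vdim$ varifold $\mu_\infty$ on $\oball{}{0}{1}$ with $\density^\vdim(\mu_\infty, x) \geq 1$ for $\mu_\infty$-almost all $x$. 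The uniform lower density estimate forces $0 \in \spt \mu_\infty$, and $\measureball{\mu_\infty}{\oball{}{0}{1}} \leq (1-\delta)\unitmeasure{\vdim}$ by lower semicontinuity of mass on open sets.

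To reach a contradiction I would appeal to the monotonicity formula for stationary varifolds (cf.\ \cite[Theorem 17.6]{MR87a:49001}). Because $\mu_\infty$ is stationary, the density function $y \mapsto \density^\vdim(\mu_\infty, y)$ is the pointwise infimum of the upper semicontinuous family $y \mapsto \measureball{\mu_\infty}{\cball{}{y}{\varrho}}/(\unitmeasure{\vdim}\varrho^\vdim)$, so it too is upper semicontinuous; the set $\{y : \density^\vdim(\mu_\infty, y) \geq 1\}$ is therefore closed. Since it contains $\mu_\infty$-almost every point, it must contain all of $\spt \mu_\infty$, and in particular $\density^\vdim(\mu_\infty, 0) \geq 1$. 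Monotonicity then yields $\measureball{\mu_\infty}{\oball{}{0}{1}} \geq \unitmeasure{\vdim}\density^\vdim(\mu_\infty, 0) \geq \unitmeasure{\vdim}$, contradicting the upper bound $(1-\delta)\unitmeasure{\vdim}$ obtained above.

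The main obstacle is the compactness step: one has to ensure that the subsequential weak limit remains rectifiable and still satisfies $\density^\vdim(\mu_\infty, x) \geq 1$ for $\mu_\infty$-almost all $x$, so that the monotonicity formula applies and yields a genuine lower density of at least one at the origin. This is the version of Allard's rectifiability/compactness theorem suited to the class of varifolds considered in \ref{miniremark:situation1}; verifying that our sequence stays inside this class (locally bounded mass, $\|\delta\tilde\mu_k\|$ bounded, density $\geq 1$ $\tilde\mu_k$-a.e.) is routine given the bounds above.
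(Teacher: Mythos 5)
Your argument follows essentially the same route the paper indicates: a contradiction–rescaling–compactness argument, using \ref{miniremark:good_point} for the uniform lower density bound, Allard-type compactness to extract a stationary rectifiable limit varifold with density $\geq 1$ almost everywhere, and the monotonicity formula to force mass at least $\unitmeasure{\vdim}$ at the origin. The paper's proof is just a one-sentence sketch pointing to exactly these ingredients, and you have filled in all the details correctly, including the upper semicontinuity argument showing that the density is $\geq 1$ everywhere on the support (not merely almost everywhere), which is the step that lets monotonicity bite at the origin.
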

\begin{proof} [Proof (cf. \protect{\cite[A.10]{mydiss}})]
	If the lemma were false, using \ref{miniremark:good_point}, a
	compactness (see e.g. \cite[Corollary 17.8, Theorem
	42.7]{MR87a:49001}) argument would lead to a contradiction to the
	monotonicity formula (see e.g.  \cite[(17.5)]{MR87a:49001}).
\end{proof}
\begin{remark}
	\ref{miniremark:good_point} and \ref{lemma:lower_density_bound} imply the
	following proposition.

	\emph{If \alt{$\vdim$, $\adim$,}{$\codim$, $\vdim$}, $p$, $U$, $\mu$
	and $\curv$ are as in \ref{miniremark:situation1}, $p = \vdim $, then
	\begin{gather*}
		\density_\ast^\vdim  ( \mu, a ) \geq 1 \quad \text{whenever $a
		\in \spt \mu$ and $\curv ( \{ a \} ) = 0$}.
	\end{gather*}}
	Clearly, the condition $\curv ( \{ a \} ) = 0$ is redundant in case
	\alt{$\delta \mu$ is representable by integration with respect to
	$\mu$}{$\| \delta \mu \|$ is absolutely continuous with respect to
	$\mu$ (i.e. $\delta \mu$ has no singular part with respect to $\mu$)}.
\end{remark}
\begin{definition}
	\alt{For $\adim  \in \nat$ let $\besicovitch{\adim
	}$\index{$\besicovitch{\adim }$} denote the \emph{Besicovitch
	number}\index{Besicovitch number} of dimension $\adim $, i.e. the best
	constant in Besicovitch's covering theorem in $\rel^\adim $.  }{For $k
	\in \nat$ denote by $N (k)$ the best constant in Besicovitch's
	covering theorem in $\rel^k$.}
\end{definition}
\begin{theorem} \label{thm:structure_lower_density}
	Suppose \alt{$\vdim$, $\adim$,}{$\codim$, $\vdim$}, $p$, $U$, $\mu$,
	and $\curv$ are as in \ref{miniremark:situation1}, $p < \vdim $, $0
	\leq s < \infty$, $0 < \varepsilon \leq ( 2 \isoperimetric{\vdim }
	)^{-p/(\vdim -p)}$, $4 \isoperimetric{\vdim } \vdim  < \Gamma <
	\infty$,
	\begin{gather*}
		A = \bigl \{ x \in U \with \density^{\ast \vdim -p} ( \curv,
		x ) < ( \varepsilon / \Gamma )^{\vdim -p} / \unitmeasure{\vdim
		-p} \bigr \},
	\end{gather*}
	denote by $B_i$ for $i \in \nat$ the set of all $x \in U$ such that
	either $\cball{}{x}{1/i} \not \subset U$ or
	\begin{gather*}
		\measureball{\curv}{\cball{}{x}{\varrho}} >
		\varepsilon^{\vdim -p} \, \mu ( \cball{}{x}{\varrho}
		)^{1-p/\vdim } \quad \text{for some $0 < \varrho < 1/i$},
	\end{gather*}
	and denote by $X_i$ for $i \in \nat$ the set of all $a \in U$ such hat
	\begin{gather*}
		\lim_{r \pluslim{0}} \mu \big ( B_i \cap \cball{}{a}{r} \big )
		\big / r^{s\vdim /(\vdim -p)} = 0.
	\end{gather*}

	Then $\classification{B_i}{x}{\cball{}{x}{1/i} \subset U}$ are open
	sets, $X_i$ are Borel sets and
	\begin{gather*}
		\mathcal{H}^s \big ( A \without \union{X_i}{i \in \nat} \big )
		= 0.
	\end{gather*}
\end{theorem}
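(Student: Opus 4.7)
The topological assertions are routine. For openness of $\{x \in B_i \with \cball{}{x}{1/i} \subset U\}$, use upper semicontinuity of $\psi(\cball{}{\cdot}{\varrho})$ and $\mu(\cball{}{\cdot}{\varrho})$: once the strict inequality defining $B_i$ is witnessed at some $(x_0, \varrho)$ with $\varrho < 1/i$, replace $\varrho$ by a slightly larger radius $\varrho'$ avoiding the right-jump discontinuities of $\varrho \mapsto \psi(\cball{}{x_0}{\varrho})$ and $\varrho \mapsto \mu(\cball{}{x_0}{\varrho})$; then small perturbations of $x_0$ preserve the strict inequality at $\varrho'$. Borel-measurability of $X_i$ is clear since its defining $\limsup$ is a Borel function of $a$.

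The crux is the following \emph{covering estimate}: for every $a \in A$ there exist $r_0 > 0$, $i_0 \in \nat$, and $\Lambda < \infty$ depending only on $\adim$ and $\varepsilon$ such that
\begin{gather*}
\mu(B_i \cap \cball{}{a}{r}) \leq \Lambda\, \psi(\cball{}{a}{2r})^{\vdim/(\vdim-p)}
\end{gather*}
for all $i \geq i_0$ and $0 < r < r_0$. To obtain it, choose $r_0$ so that $\psi(\cball{}{a}{\varrho}) < (\varepsilon/\Gamma)^{\vdim-p}\varrho^{\vdim-p}$ for $\varrho < r_0$ (possible by the definition of $A$) and $i_0$ so that $\cball{}{a}{1/i_0} \subset U$. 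For each $x \in B_i \cap \cball{}{a}{r}$ with $\cball{}{x}{1/i} \subset U$, pick a bad radius $\varrho_x \in (0, 1/i)$. The hypothesis $\varepsilon \leq (2\isoperimetric{\vdim})^{-p/(\vdim-p)}$, combined with H\"older's inequality to compare $\psi$ with $\| \delta \mu \|$ when $p > 1$, ensures that the hypothesis of \ref{miniremark:good_point} holds at every scale strictly below the infimum of bad radii at $x$, yielding $\mu(\cball{}{x}{\varrho}) \geq (2\vdim\isoperimetric{\vdim})^{-\vdim}\varrho^\vdim$ there, which passes to $\varrho_x$ by a limit argument. Combining with the upper bound $\mu(\cball{}{x}{\varrho_x}) \leq \varepsilon^{-\vdim}\psi(\cball{}{x}{\varrho_x})^{\vdim/(\vdim-p)} \leq \Gamma^{-\vdim}(\varrho_x + |x-a|)^\vdim$ (second inequality via $\cball{}{x}{\varrho_x} \subset \cball{}{a}{\varrho_x + |x-a|}$) and using $\Gamma > 4\vdim\isoperimetric{\vdim}$, one concludes $\varrho_x < |x-a| \leq r$, so $\cball{}{x}{\varrho_x} \subset \cball{}{a}{2r}$. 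Besicovitch's covering theorem applied to $\{ \cball{}{x}{\varrho_x} \}$ produces $\besicovitch{\adim}$ subfamilies of pairwise disjoint balls covering $B_i \cap \cball{}{a}{r}$; summing $\mu(\cball{}{x}{\varrho_x}) \leq \varepsilon^{-\vdim}\psi(\cball{}{x}{\varrho_x})^{\vdim/(\vdim-p)}$ on each disjoint subfamily and using convexity of $t \mapsto t^{\vdim/(\vdim-p)}$ (since $\vdim/(\vdim-p) > 1$) yields the estimate.

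From the covering estimate, $\density^{\ast s\vdim/(\vdim-p)}(\mu \restrict B_i, a) \leq \Lambda'\,\density^{\ast s}(\psi, a)^{\vdim/(\vdim-p)}$, so $a \in X_i$ whenever $\density^{\ast s}(\psi, a) = 0$. For $s < \vdim - p$ this follows at once from $a \in A$, since $\psi(\cball{}{a}{\varrho}) = O(\varrho^{\vdim-p}) = o(\varrho^s)$. For $s \geq \vdim - p$, the $\mathcal{H}^s$-null claim is reached by a Vitali-type density argument in the spirit of Federer~\cite[2.9.17, 2.10.19]{MR41:1976}: if $\mathcal{H}^s(A \setminus \bigcup_i X_i) > 0$, one extracts $\eta > 0$, $i \in \nat$, and $E \subset A$ with $\mathcal{H}^s(E) > 0$ where $\limsup_{r \pluslim 0} \mu(B_i \cap \cball{}{a}{r})/r^{s\vdim/(\vdim-p)} > \eta$; a fine Vitali cover of $E$ by bad balls gives, via the covering estimate, radii $r_j$ with $r_j^s \lesssim \psi(\cball{}{a_j}{2r_j})$, hence $\mathcal{H}^s(E) \lesssim \psi(\overline{E})$, contradicting the $(\vdim-p)$-density bound characterising $A$. \textbf{The principal obstacles} are twofold: first, the delicate interplay in the proof of the covering estimate between the good-point monotonicity lower bound and the iso upper bound at bad scales, where the infimum of bad radii may not be attained and the strict constraint $\Gamma > 4\vdim\isoperimetric{\vdim}$ is essential to force $\varrho_x < |x-a|$; second, for $s \geq \vdim - p$, the Vitali-type covering argument that converts the covering estimate into an $\mathcal{H}^s$-null conclusion must be executed with care, exploiting the $(\vdim-p)$-density bound on $\psi$ inside $A$.
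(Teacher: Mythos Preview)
Your covering estimate is essentially right, but it is too weak for the main case $s \geq \vdim-p$ (and in particular for $s=\vdim$, the case used in the applications). From
\[
\mu(B_i \cap \cball{}{a}{r}) \leq \Lambda\,\psi(\cball{}{a}{2r})^{\vdim/(\vdim-p)}
\]
you can conclude $a \in X_i$ only if $\psi(\cball{}{a}{2r}) = o(r^s)$, whereas membership in $A$ gives merely $\psi(\cball{}{a}{2r}) = O(r^{\vdim-p})$. For $s \geq \vdim-p$ this yields nothing. Your Vitali salvage does not close the gap: from $r^s \lesssim \psi(\cball{}{a}{2r})$ at arbitrarily small scales you obtain only $\density^{\ast s}(\psi,a) > 0$ on $E$, hence $\mathcal{H}^s(E) \lesssim \psi(V)$ for open $V \supset E$; but the $(\vdim-p)$-density bound on $A$ gives no smallness of $\psi(V)$, and comparing back via $\psi(E)\lesssim\mathcal{H}^{\vdim-p}(E)$ produces no contradiction either.

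The missing idea is to localise $\psi$ away from a \emph{closed} set containing $a$. The paper introduces closed sets $A_i \subset A$ (defined by requiring $\psi(\cball{}{x}{\varrho}) \leq (\varepsilon/\Gamma)^{\vdim-p}\varrho^{\vdim-p}$ uniformly for $\varrho<1/i$) and, for $x \in B_{2i} \cap \cball{}{a}{r}$, compares the infimal bad radius $t$ at $x$ not with $|x-a|$ but with $\dist(x,A_i)=|x-y|$, where $y$ is a nearest point of $A_i$. The same good-point/bad-point comparison you carried out then forces $t < |x-y|$, so that the covering balls satisfy $\cball{}{x}{\varrho} \cap A_i = \emptyset$. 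After Besicovitch one obtains
\[
\mu(B_{2i} \cap \cball{}{a}{r}) \leq \varepsilon^{-\vdim} \besicovitch{\adim}\,\psi\bigl(\oball{}{a}{2r} \setminus A_i\bigr)^{\vdim/(\vdim-p)},
\]
and now the standard density fact (Federer \cite[2.10.6, 2.10.19\,(4)]{MR41:1976}) that $\density^s(\psi \restrict U \setminus A_i, a) = 0$ for $\mathcal{H}^s$ almost every $a \in A_i$ finishes the proof directly, with no Vitali argument needed.

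A minor further omission: to ensure the infimum of bad radii at $x$ is positive you must restrict to the $\mu$-full-measure set $C=\{x\in\spt\mu : \limsup_{\varrho\to 0}\psi(\cball{}{x}{\varrho})/\mu(\cball{}{x}{\varrho})^{1-p/\vdim} < \varepsilon^{\vdim-p}\}$, as the paper does; this follows from differentiation of Radon measures \cite[2.9.5]{MR41:1976}.
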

\begin{proof}
	Clearly, $B_{i+1} \subset B_i$, $X_i \subset X_{i+1}$ and $X_i$ is a
	Borel set for $i \in \nat$. The sets $\{ x
	\in B_i \with \cball{}{x}{1/i} \subset U \}$ are open, as may obtained
	by adapting \cite[2.9.14]{MR41:1976}.

	Define for $i \in \nat$ the set $A_i$ of all $x \in U$ such that
	$\oball{}{x}{1/i} \subset U$ and
	\begin{gather*}
		\measureball{\curv}{\cball{}{x}{\varrho}} \leq ( \varepsilon
		/ \Gamma)^{\vdim -p} \varrho^{\vdim -p} \quad \text{whenever
		$0 < \varrho < 1/i$}.
	\end{gather*}
	The sets $A_i$ are closed (cp. \cite[2.9.14]{MR41:1976}) and satisfy
	$A \subset \union{A_i}{i \in \nat}$. Let $C$ denote the set of all $x
	\in \spt \mu$ such that
	\begin{gather*}
		\limsup_{\varrho \pluslim{0}} \frac{\measureball{\curv}{
		\cball{}{x}{\varrho}}}{\mu ( \cball{}{x}{\varrho} )^{1-p/\vdim
		}} < \varepsilon^{\vdim -p}
	\end{gather*}
	and note $\mu ( U \without C ) = 0$ by \cite[2.9.5]{MR41:1976}. By
	\cite[2.10.6, 2.10.19\,(4)]{MR41:1976} it is enough to prove $a \in
	X_{2i}$ for a point $a \in A_i$ with $\density^s ( \curv \restrict U
	\without A_i, a ) = 0$.

	For this purpose the following assertion will be proved. \emph{For
	each $x \in B_{2i} \cap \oball{}{a}{1/(2i)} \cap C$ there exists $0 <
	\varrho < \infty$ with}
	\begin{gather*}
		\cball{}{x}{\varrho} \subset \oball{}{a}{2|x-a|} \without A_i,
		\quad \measureball{\mu}{\cball{}{x}{\varrho}} <
		\varepsilon^{-\vdim } \curv ( \cball{}{x}{\varrho} )^{\vdim
		/(\vdim -p)}.
	\end{gather*}
	Choose $y \in A_i$ with $|y-x| = \dist (x,A_i)$ and let $J$ be the set
	of all $0 < \varrho < 1/(2i)$ with
	\begin{gather*}
		\mu ( \cball{}{x}{\varrho} ) < \varepsilon^{-\vdim } \curv (
		\cball{}{x}{\varrho} )^{\vdim /(\vdim -p)}.
	\end{gather*}
	Then $J \neq \emptyset$, because $x \in B_{2i}$, $\cball{}{x}{1/(2i)}
	\subset \oball{}{a}{1/i} \subset U$, and, since $x \in C$, $\inf J >
	0$. Therefore $t := \inf J$ satisfies
	\begin{gather*}
		0 < t < 1 / (2i), \quad \measureball{\mu}{\cball{}{x}{t}} \leq
		\varepsilon^{-\vdim } \curv ( \cball{}{x}{t} )^{\vdim /(\vdim
		-p)}, \\
		\measureball{\mu}{\cball{}{x}{\varrho}} \geq
		\varepsilon^{-\vdim } \curv ( \cball{}{x}{\varrho} )^{\vdim
		/(\vdim -p)} \quad \text{for $0 < \varrho < t$}.
	\end{gather*}
	Noting
	\begin{gather*}
		|y-x| = \dist (x,A_i) \leq |x-a| \leq 1/(2i), \quad t + |y-x|
		< 1/i, \\
		\cball{}{x}{t} \subset \cball{}{y}{t+|y-x|} \subset
		\oball{}{y}{1/i} \subset U,
	\end{gather*}
	one estimates
	\begin{multline*}
		\curv ( \cball{}{x}{t} )^{\vdim /(\vdim -p)} \leq \curv (
		\cball{}{y}{t+|y-x|} )^{\vdim /(\vdim -p)} \\
		\leq ( \varepsilon / \Gamma )^\vdim  ( t + |y-x| )^\vdim  <
		\varepsilon^\vdim  2^{-\vdim } ( 1 + |y-x| / t )^\vdim  ( 2
		\vdim \isoperimetric{\vdim } )^{-\vdim } t^\vdim 
	\end{multline*}
	and, using the inequalities derived from the definition of $t$ and
	\ref{miniremark:good_point},
	\begin{gather*}
		\measureball{\mu}{\cball{}{x}{t}} \leq \varepsilon^{-\vdim }
		\curv ( \cball{}{x}{t} )^{\vdim /(\vdim -p)} < 2^{-\vdim } (
		1 + |y-x| / t )^\vdim \measureball{\mu}{\cball{}{x}{t}},
	\end{gather*}
	hence
	\begin{gather*}
		( 1 + |y-x|/t )^\vdim  > 2^\vdim , \quad |y-x| > t
	\end{gather*}
	and the assertion follows by taking $\varrho \in J$ slighly larger
	than $t$.

	Let $0 < r < 1/(2i)$. Then the preceding assertion in conjunction with
	Besicovitch's covering theorem implies the existence of countable,
	\alt{disjointed families}{pairwise disjoint collections} of closed
	balls $F_1, \ldots, F_{\besicovitch{\adim }}$ satisfying
	\begin{gather*}
		B_{2i} \cap \cball{}{a}{r} \cap C \subset
		\alt{{\textstyle\bigcup\bigcup} \{ F_j \with j = 1, \ldots,
		\besicovitch{\adim }
		\}}{{\textstyle\bigcup_{j=1}^{\besicovitch{\adim }}\bigcup_{S
		\in F_j}} S} \subset \oball{}{a}{2r} \without A_i, \\
		\mu (S) < \varepsilon^{-\vdim } \curv ( S )^{\vdim /(\vdim
		-p)} \quad \text{for $S \in \alt{{\textstyle\bigcup} \{ F_j
		\with j = 1, \ldots, \besicovitch{\adim }
		\}}{{\textstyle\bigcup_{j=1}^{\besicovitch{\adim }}}
		F_j}$}.
	\end{gather*}
	Hence
	\begin{align*}
		& \phantom{\leq} \ \mu ( B_{2i} \cap \cball{}{a}{r} ) = \mu (
		B_{2i} \cap \cball{}{a}{r} \cap C ) \\
		& \leq {\textstyle\sum_{j=1}^{\besicovitch{\adim }}}
		{\textstyle\sum_{S \in F_j}} \mu (S) \leq \varepsilon^{-\vdim
		} {\textstyle\sum_{j=1}^{\besicovitch{\adim }}}
		{\textstyle\sum_{S \in F_j}} \curv (S)^{\vdim /(\vdim -p)} \\
		& \leq \varepsilon^{-\vdim }
		{\textstyle\sum_{j=1}^{\besicovitch{\adim }}} \bigl (
		{\textstyle\sum_{S \in F_j}} \curv (S) \bigr)^{\vdim /(\vdim
		-p)} \leq \varepsilon^{-\vdim } \besicovitch{\adim } \curv (
		\oball{}{a}{2r} \without A_i)^{\vdim /(\vdim -p)}
	\end{align*}
	and the conclusion follows by taking
	the limit $r \pluslim{0}$.
\end{proof}
\begin{remark} \label{remark:some_explanations}
	This theorem deserves some explanations.

	First, note that \emph{if $\| \delta \mu \|$ is absolutely continuous
	with respect to $\mu$, then $$\mathcal{H}^{\vdim-p} ( U \without A ) =
	0$$} and \emph{if $p=1$, then $$\mathcal{H}^{\vdim-1} ( X \without A )
	\leq ( \Gamma / \varepsilon )^{\vdim - 1} \unitmeasure{\vdim-1} \|
	\delta \mu \| ( X \without A ) \quad \text{for $X \subset U$}.$$} by
	\cite[2.10.6, 2.10.19\,(3)]{MR41:1976}. These estimates for the size
	of $U \without A$ suggest that the theorem is most useful if $\vdim -
	p \leq s \leq \vdim$.

	Clearly, \emph{if $a \in ( \spt \mu ) \without B_i$, then
	$\cball{}{a}{1/i} \subset U$ and $$( 2 \vdim
	\isoperimetric{\vdim})^{-\vdim} \varrho^\vdim \leq \measureball{\mu}{
	\cball{}{a}{\varrho}} \quad \text{for $0 < \varrho < 1/i$}$$ by
	\ref{miniremark:good_point}}. On the other hand, since the sets
	$\classification{B_i}{x}{\cball{}{x}{1/i} \subset U }$ are open and
	$B_{i+1} \subset B_i$, $X_i \subset X_{i+1}$ for $i \in \nat$, one
	infers that \emph{$\mathcal{H}^s$ almost all $a \in A \cap
	\intersection{B_i}{i \in \nat}$ satisfy $$\lim_{r \pluslim 0}
	\measureball{\mu}{\cball{}{a}{r}} \big / r^{s\vdim/(\vdim-p)} = 0.$$}
\end{remark}
\begin{remark} \label{remark:density_decision}
	Similar to the preceding remark one obtains using
	\ref{lemma:lower_density_bound} instead of \ref{miniremark:good_point}
	that $\mathcal{H}^\vdim $ almost all $x \in U$ satisfy
	\begin{gather*}
		\text{either} \quad \density_\ast^\vdim  ( \mu , x ) \geq 1
		\quad \text{or} \quad \density^{\vdim ^2/(\vdim -p)} ( \mu , x
		) = 0
	\end{gather*}
	and, in case \alt{$\delta \mu$ is representable by integration with
	respect to $\mu$,}{$\| \delta \mu \|$ is absolutely continuous with
	respect to $\mu$,} that $\mathcal{H}^{\vdim -p}$ almost all $x \in U$
	satisfy
	\begin{gather*}
		\text{either} \quad \density_\ast^\vdim  ( \mu , x ) \geq 1
		\quad \text{or} \quad \density^\vdim  ( \mu , x ) = 0.
	\end{gather*}
	Moreover, the exponent $\vdim ^2/(\vdim -p)$ cannot be replaced by any
	larger number as may be seen by taking $\mu \restrict \rel^{\vdim +1}
	\without T$ with $\mu$ as in \ref{example:scaling}. Hence, the same
	holds for the exponent $s\vdim /(\vdim -p)$ in the last equality of
	\ref{remark:some_explanations} if $s=\vdim $.

	Since $\vdim^2/(\vdim-1) \geq \vdim +1$ if $\vdim > 1$, this remark
	seems to be the underlying fact used in the proof of \cite[Theorem
	3.4]{leomas08}.
\end{remark}
\begin{remark}
	It can happen that $\mathcal{H}^\vdim  \bigl ( A \cap ( \spt \mu )
	\cap \intersection{B_i}{i \in \nat} \bigr ) > 0$. In fact taking $\mu
	\restrict \rel^{\vdim +1} \without T$ with $\mu$ as in
	\ref{example:scaling} one sees from \ref{remark:some_explanations} and
	\ref{example:scaling}\,\eqref{item:scaling:estimates} that $T \subset
	\intersection{B_i}{i \in \nat}$.
\end{remark}
\section{A differentiation theorem} \label{app:diff}
In this section the theory of first order differentiation of functions in
Lebesgue spaces defined with respect to a rectifiable varifold, similar to the
one of Calder\'on and Zygmund in \cite{MR0136849} for the special case of
Lebesgue measure, is developed. First, an abstract differentiation theorem for
measures, \ref{thm:big_o_little_o}, is proved which then allows to establish
the differentiation theorem for functions, \ref{thm:diff_mu}. The first part
of that theorem states an approximability result by functions which are
H\"older continuous with exponent $\alpha$ which, in the particular case
$\alpha = 1$ implies a Rademacher type theorem for differentiability in
Lebesgue spaces, see \ref{remark:rademacher}. The second part of
\ref{thm:diff_mu} may in fact be regarded as an application of this theory and
is designed for use in \cite{snulmenn:poincare}.
\begin{theorem} \label{thm:big_o_little_o}
	Suppose \alt{$\vdim$, $\adim$,}{$\codim$, $\vdim$}, $p$, $U$, and
	$\mu$ are as in \ref{miniremark:situation1}, $\nu$ measures $U$ with
	$\nu ( U \without \spt \mu ) = 0$, $A$ is $\mu$ measurable with $\nu
	(A) = 0$, and $1 \leq q < \infty$. In case $p < \vdim $ additionally
	suppose for some $1 \leq r \leq \infty$ and some nonnegative function
	$f \in \Lp{r}_\mathrm{loc} ( \mu)$ that
	\begin{gather*}
		\nu = f \mu \quad \text{and} \quad q \leq 1 + ( 1 - 1/r )
		\frac{p}{\vdim -p}.
	\end{gather*}

	Then for $\mathcal{H}^\vdim $ almost all $a \in A$
	\begin{gather*}
		\limsup_{s \pluslim{0}} \nu ( \cball{}{a}{s} ) \big / s^{\vdim
		q} \quad \text{equals either $0$ or $\infty$}.
	\end{gather*}
\end{theorem}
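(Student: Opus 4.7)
I argue by contradiction. Assuming the set
\begin{gather*}
    A^\ast := \classification{A}{a}{0 < \limsup\nolimits_{s \pluslim 0} \measureball{\nu}{\cball{}{a}{s}} / s^{\vdim q} < \infty}
\end{gather*}
has positive $\mathcal{H}^\vdim$ measure, I decompose $A^\ast$ along rational upper and lower bounds of this $\limsup$ and pass to a compact subset to obtain a compact $K \subset A^\ast$ with $\mathcal{H}^\vdim(K) > 0$ together with positive finite $t_1 < t_2$ and $\delta_0$ so that $\measureball{\nu}{\cball{}{a}{s}} \leq t_2 s^{\vdim q}$ for every $a \in K$ and $0 < s < \delta_0$, while for each such $a$ there are arbitrarily small $s$ with $\measureball{\nu}{\cball{}{a}{s}} \geq t_1 s^{\vdim q}$. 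By \ref{remark:some_explanations} and \ref{remark:density_decision}, after discarding a further $\mathcal{H}^\vdim$-null subset I may assume $\density_\ast^\vdim(\mu, a) \geq 1$ on $K$, giving $\measureball{\mu}{\cball{}{a}{s}} \geq c_0 s^\vdim$ for small $s$ and comparability of $\mu \restrict K$ with $\mathcal{H}^\vdim \restrict K$.

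The core is a Besicovitch covering argument. Since $\nu(K) \leq \nu(A) = 0$, outer regularity of $\nu$ produces, for each $\eta > 0$, an open $V \supset K$ with $\measureball{\nu}{V} < \eta$ and, by compactness of $K$, some $\delta_1 \in (0, \delta_0)$ with $\cball{}{a}{\delta_1} \subset V$ for every $a \in K$. Choosing for each $a \in K$ a radius $s(a) \in (0, \delta_1)$ with $\measureball{\nu}{\cball{}{a}{s(a)}} \geq t_1 s(a)^{\vdim q}$ and invoking Besicovitch's covering theorem yields $\besicovitch{\adim}$ disjointed subfamilies $F_1, \ldots, F_{\besicovitch{\adim}}$ of these balls whose union covers $K$, so that
\begin{gather*}
	t_1 {\textstyle\sum_{j, S}} (\mathrm{rad}\, S)^{\vdim q} \leq {\textstyle\sum_{j, S}} \nu(S) \leq \besicovitch{\adim}\, \eta.
\end{gather*}
For $q = 1$ the left-hand side dominates $c\, \mathcal{H}^\vdim(K) > 0$ via the covering property and the density comparability of the previous step, so letting $\eta \pluslim 0$ produces the desired contradiction.

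The delicate case is $q > 1$, which by hypothesis forces $p < \vdim$ and the sharp constraint $q \leq 1 + (1 - 1/r) p / (\vdim - p)$; here I use $\nu = f\mu$ with $f \in \Lp{r}_\mathrm{loc}(\mu)$. Since $\nu(A) = 0$, the function $f$ vanishes $\mu$-almost everywhere on $A$, so Lebesgue differentiation of $f^r \in \Lp{1}_\mathrm{loc}(\mu)$ provides a quantity $\varepsilon(s; a) := \measureball{\mu}{\cball{}{a}{s}}^{-1} \int_{\cball{}{a}{s}} f^r \ud\mu$ tending to $0$ as $s \pluslim 0$ for $\mu$-a.e.\ $a \in K$. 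H\"older's inequality together with $\measureball{\mu}{\cball{}{a}{s}} \leq C s^\vdim$ yields $\measureball{\nu}{\cball{}{a}{s}} \leq C \varepsilon(s; a)^{1/r} s^\vdim$, which combined with the lower bound $t_1 s(a)^{\vdim q} \leq \measureball{\nu}{\cball{}{a}{s(a)}}$ forces
\begin{gather*}
	s(a)^{\vdim (q - 1)} \leq (C / t_1)\, \varepsilon(s(a); a)^{1/r}.
\end{gather*}
Via Egoroff's theorem I restrict to a compact subset of $K$, still of positive $\mathcal{H}^\vdim$ measure, on which this decay is uniform in $a$; the resulting uniform bound on the Besicovitch radii converts ${\textstyle\sum}(\mathrm{rad}\, S)^{\vdim q}$ into an estimate proportional to ${\textstyle\sum}(\mathrm{rad}\, S)^\vdim \gtrsim \mathcal{H}^\vdim(K)$ multiplied by a factor vanishing with $\eta$, closing the contradiction exactly at the boundary exponent.

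\emph{Main obstacle.} The crux is the case $q > 1$: one must convert the qualitative Lebesgue-differentiation decay $\varepsilon(s; a) \pluslim 0$ into a uniform quantitative bound on the Besicovitch radii and match it against the lower density bound $\measureball{\mu}{\cball{}{a}{s}} \geq c_0 s^\vdim$ with sufficient precision to saturate the critical hypothesis $q \leq 1 + (1 - 1/r) p / (\vdim - p)$ rather than losing slack. The isoperimetric-inequality machinery of Section~\ref{app:iso} enters both through the density lower bound and through the sharp exponent $\vdim^2/(\vdim - p)$ implicit in \ref{remark:density_decision}.
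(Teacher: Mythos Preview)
Your $q>1$ argument has a real gap. The inequality $s(a)^{\vdim(q-1)} \leq (C/t_1)\,\varepsilon(s(a);a)^{1/r}$ is an \emph{upper} bound on the Besicovitch radii, so it gives ${\textstyle\sum_S} (\mathrm{rad}\,S)^{\vdim q} \leq (\sup_S \mathrm{rad}\,S)^{\vdim(q-1)} {\textstyle\sum_S} (\mathrm{rad}\,S)^\vdim$ --- the wrong direction for the lower bound $\gtrsim \mathcal{H}^\vdim(K)$ you need to contradict $t_1{\textstyle\sum_S}(\mathrm{rad}\,S)^{\vdim q}\leq \besicovitch{\adim}\eta$. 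Shrinking radii makes ${\textstyle\sum}(\mathrm{rad}\,S)^{\vdim q}$ smaller, not larger, so no contradiction closes. Moreover, when $r=\infty$ Lebesgue differentiation of $f^r$ is unavailable, and more fundamentally your H\"older step $\measureball{\nu}{\cball{}{a}{s}}\leq C\varepsilon^{1/r}s^\vdim$ never actually uses $p$ or the sets $B_i$ of \ref{thm:structure_lower_density}; an argument insensitive to the first variation cannot possibly detect the sharp threshold $q=1+(1-1/r)p/(\vdim-p)$, which by \ref{remark:precise2} is genuinely critical. (For $q=1$ your scheme is essentially sound, though note that when $p=\vdim$ the measure $\nu$ is not assumed Radon, so the outer-regularity step needs justification.)

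The paper's route is quite different: it does not bound $\mathcal{H}^\vdim$ of a bad set but shows pointwise that $\limsup<\infty$ forces $\lim=0$. Fix $A_i=\{a\in A:\measureball{\nu}{\cball{}{a}{s}}\leq is^{\vdim q}\text{ for }s<1/i\}$ and take $a\in A_i$ with $\density^\vdim(\mu\restrict U\without A_i,a)=0$ and, from \ref{thm:structure_lower_density}, $\mu(B_j\cap\cball{}{a}{s})/s^{\vdim^2/(\vdim-p)}\to 0$ for some $j$. For each $x\in\cball{}{a}{s}\cap\spt\mu\without(B_j\cup A_i)$ choose $y\in A_i$ nearest to $x$ and set $t=|x-y|$: then $\measureball{\nu}{\cball{}{x}{t/2}}\leq\measureball{\nu}{\cball{}{y}{3t/2}}\leq i(3t/2)^{\vdim q}$, while $x\notin B_j$ gives the isoperimetric lower bound $\measureball{\mu}{\cball{}{x}{t/2}}\geq c(t/2)^\vdim$, hence $\nu\leq c\,\mu^q$ on each such ball, which moreover lies in $\cball{}{a}{2s}\without A_i$. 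Besicovitch now yields $\nu(\cball{}{a}{s}\without B_j)\leq c\besicovitch{\adim}\,\mu(\cball{}{a}{2s}\without A_i)^q=o(s^{\vdim q})$. The remaining piece $\nu(B_j\cap\cball{}{a}{s})$ is bounded by $\mu(B_j\cap\cball{}{a}{s})^{1-1/r}\|f\|_{\Lp{r}(\mu\restrict\cball{}{a}{s})}$, and it is exactly here that the exponent $\vdim^2/(\vdim-p)$ from \ref{thm:structure_lower_density} enters and saturates the hypothesis on~$q$.
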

\begin{proof}
	For $i \in \nat$ let $B_i$ denote the set of all $x \in U$ such that
	either $\cball{}{x}{1/i} \not \subset U$ or
	\begin{gather*}
		\measureball{\| \delta \mu \|}{\cball{}{x}{\varrho}} > ( 2
		\isoperimetric{\vdim } )^{-1} \mu ( \cball{}{x}{\varrho}
		)^{1-1/\vdim } \quad \text{for some $0 < \varrho < 1/i$}.
	\end{gather*}

	First, the case $A \subset \{ x \in U \with \density^{\ast \vdim } (
	\mu, x) > 0 \}$ will be treated. In this case $A$ is \alt{countably
	$\mathcal{H}^\vdim $ measurable\footnote{Recall that a set is called
	\emph{countably $\phi$ measurable} if and only if it is expressible as
	the union of a countable collection of $\phi$ measurable sets, each
	with finite $\phi$ measure.}}{measurable and $\sigma$ finite with
	respect to $\mathcal{H}^\vdim $} by
	\cite[2.10.19\,(1)\,(3)]{MR41:1976}. Hence one may assume $A$ to be
	compact. Define
	\begin{gather*}
		A_i = \{ a \in A \with
		\text{$\measureball{\nu}{\cball{}{a}{s}}/ \leq i\, s^{\vdim
		q}$ for $0 < s < 1/i$} \}
	\end{gather*}
	whenever $i \in \nat$, $1/i < \dist ( A, \rel^\adim  \without U)$. The
	sets $A_i$ are compact (cp.  \cite[2.9.14]{MR41:1976}) and their union
	equals
	\begin{gather*}
		\big \{ a \in A \with \limsup_{s \pluslim{0}} \nu (
		\cball{}{a}{s} ) / s^{\vdim q} < \infty \big \}.
	\end{gather*}
	It therefore suffices to show for each $i \in \nat$ with $1/i < \dist
	( A, \rel^\adim  \without U )$
	\begin{gather*}
		\lim_{s \pluslim{0}} \nu ( \cball{}{a}{s} ) \big / s^{\vdim q}
		= 0 \quad \text{for $\mathcal{H}^\vdim $ almost all $a \in
		A_i$}.
	\end{gather*}
	In fact, this equality will be proved for all $a \in A_i$ satisfying
	\begin{gather*}
		\| \delta \mu \| ( \{ a \} ) = 0, \quad \density^\vdim  ( \mu
		\restrict U \without A_i , a ) = 0, \quad \density^\vdim  (
		f^r \mu, a ) = 0 \quad \text{if $r < \infty$}, \\
		\limsup_{s \pluslim{0}} \mu ( B_j \cap \cball{}{a}{s} ) \big /
		s^{\vdim ^2/(\vdim -p)} = 0 \quad \text{for some $j \in \nat$,
		$j \geq 2i$, if $p < \vdim $}
	\end{gather*}
	as $\mathcal{H}^\vdim $ almost all $a \in A_i$ do according to
	\cite[2.10.19\,(3)\,(4)]{MR41:1976} and
	\ref{thm:structure_lower_density}.

	In case $p = \vdim $ one chooses $j \in \nat$, $j \geq 2i$, using
	\ref{miniremark:good_point} such that
	\begin{gather*}
		B_j \cap \cball{}{a}{1/j} = \emptyset.
	\end{gather*}
	Let $0 < s < 1/j$. For $x \in \cball{}{a}{s} \cap ( \spt \mu )
	\without ( B_j \cup A_i )$ there exists $y \in A_i$ with $|x-y| =
	\dist (x,A_i)$, hence
	\begin{gather*}
		t := |x-y| \leq |x-a| \leq s < 1/j \leq 1/(2i), \\
		\cball{}{x}{|x-y|/2} \subset \cball{}{y}{3|x-y|/2} \cap
		\cball{}{a}{2s} \without A_i, \\
		\measureball{\nu}{\cball{}{x}{t/2}} \leq \measureball{\nu}{
		\cball{}{y}{3t/2}} \leq i 3^{\vdim q} ( t/2 )^{\vdim q} \leq c
		\, \mu ( \cball{}{x}{t/2})^q
	\end{gather*}
	where $c = i 3^{\vdim q} ( 2 \isoperimetric{\vdim } \vdim  )^{\vdim
	q}$. Therefore one infers from Besicovitch's covering theorem the
	existence of countable, \alt{disjointed families}{pairwise disjoint
	collections} $F_1, \ldots, F_{\besicovitch{\adim }}$ of closed balls
	such that
	\begin{gather*}
		\cball{}{a}{s} \cap ( \spt \mu ) \without ( B_j \cup A_i )
		\subset \alt{{\textstyle\bigcup \bigcup} \{ F_k \with k = 1,
		\ldots, \besicovitch{\adim }
		\}}{{\textstyle\bigcup_{k=1}^{\besicovitch{\adim }}\bigcup_{S
		\in F_k} S}} \subset \cball{}{a}{2s} \without A_i, \\
		\nu (S) \leq c \, \mu (S)^q \quad \text{whenever $S \in \alt{
		{\textstyle\bigcup} \{ F_k \with k = 1, \ldots,
		\besicovitch{\adim }
		\}}{{\textstyle\bigcup_{k=1}^{\besicovitch{\adim }}} F_k}$},
	\end{gather*}
	hence
	\begin{gather*}
		\nu ( \cball{}{a}{s} \without B_j ) = \nu ( \cball{}{a}{s}
		\cap ( \spt \mu ) \without ( B_j \cup A_i ) ) \leq c
		\besicovitch{\adim } \, \mu ( \cball{}{a}{2s} \without A_i)^q,
		\\
		\lim_{s \pluslim{0}} \nu ( \cball{}{a}{s} \without B_j ) \big
		/ s^{\vdim q} = 0.
	\end{gather*}

	To conclude the proof of the first case, one observes
	\begin{gather*}
		\nu ( B_j \cap \cball{}{a}{s} ) = 0 \quad \text{if $p=\vdim
		$}, \\
		\nu ( B_j \cap \cball{}{a}{s} ) \leq \mu ( B_j \cap
		\cball{}{a}{s} )^{1-1/r} \| f \|_{\Lp{r} ( \mu \restrict
		\cball{}{a}{s})} \quad \text{if $p < \vdim $}
	\end{gather*}
	implying
	\begin{gather*}
		\lim_{s \pluslim{0}} \nu ( B_j \cap \cball{}{a}{s} ) \big /
		s^{\vdim q} = 0
	\end{gather*}
	because $( 1-1/r) \frac{\vdim }{\vdim -p} + 1/r \geq q$ in case $p <
	\vdim $.

	It remains to treat the case $A \subset \{ x \in U \with
	\density^\vdim  ( \mu, x) = 0 \}$. Using \ref{miniremark:good_point} and
	\ref{remark:density_decision} one obtains
	\begin{gather*}
		\text{$A \cap \spt \mu$ is countable} \quad \text{if $p=\vdim
		$}, \\
		\density^{\vdim ^2/(\vdim -p)} ( \mu, a ) = 0 \quad \text{for
		$\mathcal{H}^\vdim $ almost all $a \in A$} \quad \text{if $p <
		\vdim $}
	\end{gather*}
	and the claim follows by using H{\"o}lder's inequality as in the
	preceding paragraph noting by \cite[2.10.19\,(4)]{MR41:1976}
	\begin{gather*}
		\density^\vdim  ( f^r \mu , a ) = 0 \quad \text{for
		$\mathcal{H}^\vdim $ almost all $a \in A$} \quad \text{if $r<
		\infty$}. \qedhere
	\end{gather*}
\end{proof}
\begin{remark}
	This theorem generalises \cite[2.9.17]{MR41:1976} and \cite[Theorem
	10\,(ii)]{MR0136849}. The case treated by Federer roughly corresponds
	to the case $p=\vdim $, $q=1$ with $\mu$ satisfying a doubling
	condition. The case treated by Calder\'on and Zygmund corresponds to
	\alt{$p=\vdim =\adim $}{$p=\vdim$, $\codim=0$}, $\mu =
	\mathcal{L}^\adim $ and $\nu$ \alt{representable by
	integration}{absolutely continuous} with respect to $\mu$. The method
	of proof is based on Federer's proof and
	\ref{thm:structure_lower_density} is used because of the absence of a
	doubling condition.
\end{remark}
\begin{remark} \label{remark:precise1}
	If $q = 1$, the condition $\nu ( U \without \spt \mu ) = 0$ cannot be
	omitted as may be seen from \cite[2.9.18\,(2)]{MR41:1976}.
\end{remark}
\begin{remark} \label{remark:precise2}
	If $p < \vdim $ the condition $q \leq 1 + ( 1-1/r)p/(\vdim -p)$ cannot
	be omitted as can be shown using \ref{example:scaling}. In fact given
	$\mu$ and $T$ as in \ref{example:scaling} a counterexample is provided
	by $\nu := \mu \restrict \rel^{\vdim +1} \without T$ in case $r =
	\infty$ and if $1 < r < \infty$ applying
	\ref{example:scaling}\,\eqref{example:scaling:last} with $s = nq$ and
	$\hoelder_1 q_1 = \hoelder_2 q_2$ slightly larger than $\frac{\vdim
	p}{\vdim - p}$ yields a function $f$ such that $\nu := f \mu$ does not
	satisfy the conclusion of \ref{thm:big_o_little_o}. Finally, if $r=1$
	the condition is also violated for a slightly larger $r$, hence
	reducing this case to the previous one.
\end{remark}
\begin{remark}
	Note that the preceding two remarks remain valid if $\mathcal{H}^\vdim
	$ is replaced by $\mu$ in the conclusion of \ref{thm:big_o_little_o}.
\end{remark}
\begin{definition} \label{def:fint}
	Whenever $A$ is $\phi$ measurable set with $0 < \phi (A) < \infty$ and
	$f \in \Lp{1} ( \phi \restrict A )$ one defines $\fint_A f \ud \phi =
	\phi(A)^{-1} \int_A f \ud \phi$.
\end{definition}
\begin{theorem} \label{thm:diff_mu}
	Suppose $\vdim$, $\codim$, $p$, $U$, and $\mu$ are as in
	\ref{miniremark:situation1}, $Z$ is a separable Banach space, $f : U
	\to Z$ is $\mu$ measurable, $0 < \alpha \leq 1$, $1 \leq q < \infty$,
	and $A$ is the set of all $x \in \spt \mu$ such that
	\begin{gather*}
		\limsup_{\varrho \pluslim{0}} \varrho^{-\alpha q}
		{\textstyle\fint_{\cball{}{x}{\varrho}}} | f ( \xi ) - z |^q
		\ud \mu (\xi) < \infty \quad \text{for some $z \in Z$}.
	\end{gather*}
	In case $p < \vdim$ additionally suppose that $f \in \Lploc{r} ( \mu,
	Z )$ for some $1 \leq r \leq \infty$ satisfying
	\begin{gather*}
		\alpha q / \vdim \leq \Big ( 1 - \frac{q}{r} \Big )
		\frac{p}{\vdim-p}.
	\end{gather*}
	
	Then $A$ is a Borel set and the following two statements hold:
	\begin{enumerate}
		\item \label{item:diff_mu:approx} For every $\varepsilon > 0$
		there exists a function $g : U \to Z$ which locally satisfies
		a H{\"o}lder condition with exponent $\alpha$ such that
		\begin{gather*}
			\mu ( A \cap \{ x \with f(x) \neq g(x) \} ) \leq
			\varepsilon.
		\end{gather*}
		Moreover, for every function $g$ which locally satisfies a
		H{\"o}lder condition with exponent $\alpha$ there holds
		\begin{gather*}
			\lim_{\varrho \pluslim{0}} \varrho^{-\alpha q}
			{\textstyle\fint_{\cball{}{x}{\varrho}}} | f (\xi) -
			g(\xi) |^q \ud \mu ( \xi ) = 0
		\end{gather*}
		for $\mu$ almost all $x \in A$ with $f(x) = g(x)$.
		\item \label{item:diff_mu:scaling_bad_set} If $\varepsilon >
		0$, $D_i(a)$ denotes for $a \in \dmn f$, $i \in \nat$
		the set of all $x \in U$ such that either $\cball{}{x}{1/i}
		\not \subset U$ or
		\begin{gather*}
			{\textstyle\int_{\cball{}{x}{\varrho}}} | f ( \xi ) -
			f (a) |^q \ud \mu ( \xi ) > \varepsilon \,
			\measureball{\mu}{\cball{}{x}{\varrho}} \quad
			\text{for some $0 < \varrho < 1/i$},
		\end{gather*}
		$Y_i$ denotes for $i \in \nat$ the set of all $a \in U$
		such that 
		\begin{gather*}
			\lim_{r \pluslim{0}} \mu ( D_i(a) \cap \cball{}{a}{r} )
			/ r^{\vdim+\alpha q} = 0,
		\end{gather*}
		then the sets $Y_i$ are $\mu$ measurable and
		\begin{gather*}
			\mu \big ( A \without {\textstyle\bigcup} \{ Y_i \with
			i \in \nat \} \big ) = 0.
		\end{gather*}
	\end{enumerate}
\end{theorem}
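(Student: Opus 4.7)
My plan is to deduce the theorem from the abstract differentiation result~\ref{thm:big_o_little_o}, the lower-density bounds of Section~\ref{app:iso}, and a Whitney-type extension of Banach-valued H\"older maps. For Borel measurability of $A$ and reduction to uniform bounds, let $\{z_j\}_{j \in \nat}$ be a countable dense subset of $Z$ and set
\[
B_{j,k,i} = \bigl\{ x \in \spt \mu \with {\textstyle\int_{\cball{}{x}{\varrho}}} |f - z_j|^q \ud \mu \leq k\, \varrho^{\alpha q} \measureball{\mu}{\cball{}{x}{\varrho}} \text{ for } 0 < \varrho < 1/i \bigr\};
\]
these are Borel (restrict $\varrho$ to rationals and use upper semicontinuity in $\varrho$). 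At every $x \in A$ with positive lower $\vdim$-density, the average $z(x) := \lim_{\varrho \downarrow 0} \fint_{\cball{}{x}{\varrho}} f \ud \mu$ exists in $Z$ and is the unique $z$ admissible in the defining bound (by Minkowski's inequality in $\Lp{q}$ combined with the lower density bound); hence each such $x$ lies in some $B_{j,k,i}$. The rest of $A$ is $\mu$-null by~\ref{miniremark:good_point} and~\ref{thm:structure_lower_density}, so $A$ is Borel modulo $\mu$-null sets and every subsequent argument reduces to a fixed $B := B_{j,k,i}$.

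For the construction of $g$ in part~(1), on $B$ one has $z(x)$ as above, and a triangle-inequality computation with $\varrho := 2|x-y|$ yields $|z(x) - z(y)| \leq C|x - y|^\alpha$ on a full-measure subset $B' \subset B$. A Whitney/McShane-type extension for Banach-valued H\"older maps then extends $z$ to a locally $\alpha$-H\"older $g : U \to Z$ with $g = f$ on $B'$ (using also that $f = z$ holds $\mu$-a.e.\ on $B'$ by Lebesgue differentiation). Varying $(j,k,i)$ and exhausting $A$ up to $\mu$-measure less than $\varepsilon$ produces the $g$ required by~(1).

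The \emph{moreover} in~(1) reduces to applying~\ref{thm:big_o_little_o} to the measure $\nu := |f - g|^q \mu$ with exponent $q' := 1 + \alpha q / \vdim$, so that $\vdim q' = \vdim + \alpha q$; in the case $p < \vdim$ one replaces $f$ by $|f - g|^q$ and $r$ by $r/q$, whereupon the hypothesis becomes exactly $\alpha q/\vdim \leq (1 - q/r) p/(\vdim - p)$. For $\mu$-a.e.\ $x \in A$ the defining condition of $A$ holds with $z = f(x)$ by Lebesgue differentiation, and for such $x$ with $f(x) = g(x)$, the local H\"older continuity of $g$ near $x$ and finiteness of the upper $\vdim$-density of $\mu$ give
\[
\nu(\cball{}{x}{\varrho}) \leq 2^{q-1} \bigl( {\textstyle\int_{\cball{}{x}{\varrho}}} |f - f(x)|^q \ud \mu + {\textstyle\int_{\cball{}{x}{\varrho}}} |g - g(x)|^q \ud \mu \bigr) \leq C \varrho^{\vdim + \alpha q},
\]
so the limsup in~\ref{thm:big_o_little_o} is finite and hence equals $0$ by the dichotomy; the passage from $\mathcal{H}^\vdim$-a.e.\ to $\mu$-a.e.\ uses $\mu \ll \mathcal{H}^\vdim$ for rectifiable varifolds.

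For part~(2), $\mu$-measurability of $Y_i$ follows from Fubini applied to the Borel set $\{(a,x) \with x \in D_i(a)\} \subset U \times U$. Given $a \in A$ with $f(a) = g(a)$ for some $g$ from~(1) at which the \emph{moreover} of~(1) holds and the upper $\vdim$-density of $\mu$ is finite, the estimate $|f - f(a)|^q \leq 2^{q-1} (|f - g|^q + |g - g(a)|^q)$ together with the local H\"older control of $g$ shows that for $r$ small $D_i(a) \cap \cball{}{a}{r}$ is contained in the union of the bad set of Section~\ref{app:iso} (of $\mu$-measure $o(r^{\vdim^2/(\vdim - p)})$ by~\ref{thm:structure_lower_density}) with the set of $x$ where some local average of $|f - g|^q$ exceeds $\varepsilon/2$. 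A Besicovitch covering on the good subset (where $\mu \geq c\varrho^\vdim$) bounds the $\mu$-measure of the latter by $C \varepsilon^{-1} \int_{\cball{}{a}{2r}} |f - g|^q \ud \mu = o(r^{\vdim + \alpha q})$, and the hypothesis on $q$ forces $\alpha q \leq \vdim p/(\vdim - p)$, hence $\vdim + \alpha q \leq \vdim^2/(\vdim - p)$, so the bad-set contribution is of the same or smaller order; exhausting $A$ via $\varepsilon_k \downarrow 0$ and $g_k$ concludes. The main obstacle is the H\"older extension step, requiring a Whitney/McShane-type extension valid for separable Banach targets with H\"older constants depending only on $\alpha$; the remainder consists of applications of~\ref{thm:big_o_little_o}, the size bounds from Section~\ref{app:iso}, and Besicovitch covering.
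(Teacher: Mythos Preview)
Your treatment of part~\eqref{item:diff_mu:approx} and the Borel measurability of $A$ is broadly on track and close in spirit to the paper (which obtains that $A$ is genuinely Borel via closed sets $E_i \subset \spt\mu \times Z$ together with univalence of the projection, rather than only Borel modulo a $\mu$-null set). The application of Theorem~\ref{thm:big_o_little_o} to $\nu = |f-g|^q\mu$ with exponent $1+\alpha q/\vdim$ in the ``moreover'' clause matches the paper exactly.

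The genuine gap is in part~\eqref{item:diff_mu:scaling_bad_set}. Your Besicovitch step asserts that the covering balls $\cball{}{x}{\varrho}$ lie inside $\cball{}{a}{2r}$, but you have not shown $\varrho \leq r$. The defining condition of $D_j(a)$ only gives $\varrho < 1/j$, and $1/j$ is fixed while $r \to 0$; so a priori the balls extend to $\cball{}{a}{r + 1/j}$, and $\int_{\cball{}{a}{r+1/j}} |f-g|^q \ud\mu$ does not tend to zero with $r$, so your bound $C\varepsilon^{-1}\int_{\cball{}{a}{2r}}|f-g|^q\ud\mu$ is unjustified. The paper supplies exactly this missing localisation by a different route: instead of splitting $|f - f(a)|$ through $g$, it passes to the nearest point $\zeta \in K$ (the compact subset of $A_i$ on which $f=g$) and, using that $\zeta \in A_i$ controls the $q$-average of $|f - f(\zeta)|$ over balls centred at $\zeta$, that $|f(\zeta)-f(a)| = |g(\zeta)-g(a)| \leq h(2r)^\alpha$ is small, and the lower density bound at $x \notin C_i$, proves
\[
{\textstyle\int_{\cball{}{x}{\varrho}}} |f - f(a)|^q \ud\mu \;\leq\; \varepsilon\, 2^{-\vdim}\bigl(1 + |\zeta-x|/\varrho\bigr)^\vdim \measureball{\mu}{\cball{}{x}{\varrho}}.
\]
When the left side exceeds $\varepsilon\,\measureball{\mu}{\cball{}{x}{\varrho}}$ this forces $\varrho < |\zeta - x| = \dist(x,K) \leq |x-a| \leq r$, whence $\cball{}{x}{\varrho} \subset \oball{}{a}{2r} \setminus K$. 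Only after this step does Besicovitch yield the bound $\besicovitch{\adim}\,\varepsilon^{-1}\int_{\oball{}{a}{2r}\setminus K}|f - f(a)|^q \ud\mu$, which is then shown to be $o(r^{\vdim+\alpha q})$ using the ``moreover'' of part~\eqref{item:diff_mu:approx} together with $\density^\vdim(\mu\restrict U\setminus K, a)=0$.
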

\begin{proof} [Proof of \eqref{item:diff_mu:approx}]
	Let $\pi : \rel^\adim \times Z \to \rel^\adim$ denote the
	projection and for $i \in \nat$ let $E_i$ denote the set of all
	$(x,z) \in \spt \mu \times Z$ such that $\oball{}{x}{1/i} \subset U$
	and
	\begin{gather*}
		{\textstyle\fint_{\cball{}{x}{\varrho}}} | f(\xi) - z|^q \ud
		\mu (\xi) \leq i \varrho^{\alpha q} \quad \text{whenever $0 <
		\varrho < 1/i$}.
	\end{gather*}
	Then $E_i$ is closed (cp. \cite[2.9.14]{MR41:1976}), $\pi | E_i$ is
	univalent, and both $\pi \lIm E_i \rIm$ and $A = \bigcup \{ \pi \lIm
	E_i \rIm \with i \in \nat \}$ are Borel sets by
	\cite[2.2.10]{MR41:1976}.

	To prove the first part of \eqref{item:diff_mu:approx}, the problem is
	reduced to the case $\mu = \mathcal{L}^\vdim \restrict K$ for some
	compact set $K$ (not necessarily satisfying a condition on $\delta
	\mu$) via \cite[3.2.18]{MR41:1976}. This case can then be treated by
	adapting \cite[3.1.8, 3.1.14]{MR41:1976}, see also
	\cite[\printRoman{6}.2.2.2]{MR0290095}.

	Concerning the second half of \eqref{item:diff_mu:approx}, one
	observes that every such function $g$ satisfies
	\begin{gather*}
		\limsup_{\varrho \pluslim{0}} \varrho^{-\alpha q}
		{\textstyle\fint_{\cball{}{x}{\varrho}}} | f ( \xi ) - g( \xi
		)|^q \ud \mu ( \xi ) < \infty
	\end{gather*}
	for $\mathcal{L}^\vdim$ almost all $x \in A$ with $f(x) = g(x)$ and
	\ref{thm:big_o_little_o} may be applied with $\nu$, $r$, $q$, $A$
	replaced by $|f-g|^q \mu$, $r/q$, $1 + \alpha q / \vdim$,
	$\classification{A}{x}{f(x) = g(x)}$ if $p < \vdim$ and $|f-g|^q \mu$,
	$\infty$, $1 + \alpha q / \vdim$, $\classification{A}{x}{f(x) = g(x)}$
	else.
\end{proof}
\begin{proof} [Proof of \eqref{item:diff_mu:scaling_bad_set}]
	For any $0 < \varrho < \infty$, $x \in \rel^\adim$ denote by
	$b_{x,\varrho}$ the characteristic function of $\cball{}{x}{\varrho}$,
	define $U_i = \classification{U}{x}{\dist (x, \rel^\adim \without U )
	> 1/i}$ and observe that the function mapping $(a,x,\xi) \in ( \dmn f
	) \times U \times ( \dmn f )$ onto
	\begin{gather*}
		b_{x,\varrho} (\xi) | f ( \xi ) - f (a) |^q - \varepsilon
		b_{x,\varrho} ( \xi )
	\end{gather*}
	is $\mu \times \mu \times \mu$ measurable for every $0 < \varrho <
	\infty$. Applying Fubini's theorem, one infers that the function
	mapping $(a,x) \in ( \dmn f ) \times U_i$ onto
	\begin{gather*}
		\sup \big \{ {\textstyle\int_{\cball{}{x}{\varrho}}} | f (
		\xi ) - f (a) |^q \ud \mu ( \xi ) - \varepsilon \,
		\measureball{\mu}{\cball{}{x}{\varrho}} \with 0 < \varrho <
		1/i \big \}
	\end{gather*}
	is $\mu \times ( \mu \restrict U_i )$ measurable for each $i \in
	\nat$, since the supremum by restricted to a countable, dense
	subset of $\{ \varrho \with 0 < \varrho < 1/i \}$. For the same reason
	\begin{gather*}
		\sup \big \{ r^{-m-\alpha q} \mu ( X_i(a) \cap \cball{}{a}{r}
		) \with 0 < r < 1/j \big \}
	\end{gather*}
	depends $\mu$ measurably on $a$ for each $i,j \in \nat$.
	Therefore the sets $Y_i$ are $\mu$ measurable.

	For $i \in \nat$ let $A_i$ denote the set of all $a \in ( \dmn f
	) \cap ( \spt \mu )$ such that $\oball{}{a}{1/i} \subset U$ and
	whenever $0 < \varrho < 1/i$
	\begin{gather*}
		\measureball{\mu}{\cball{}{a}{\varrho}} \leq i \varrho^\vdim,
		\quad {\textstyle\fint_{\cball{}{a}{\varrho}}} | f ( \xi ) - f
		(a) |^q \ud \mu ( \xi ) \leq i \varrho^{\alpha q}.
	\end{gather*}
	$A_i$ are $\mu$ measurable sets as may be verified using the first
	paragraph of the proof of \eqref{item:diff_mu:approx} and noting the
	fact that the last condition may be replaced by the two conditions
	\begin{gather*}
		a \in \pi \lIm E_i \rIm , \quad \lim_{\varrho \pluslim{0}}
		{\textstyle\fint_{\cball{}{a}{\varrho}}} f (\xi) \ud \mu (
		\xi ) = f(a).
	\end{gather*}
	Note $\mu ( A \without \bigcup \{ A_i \with i \in \nat \} ) =
	0$. For $i \in \nat$ let $C_i$ denote the set of all $x \in \spt
	\mu$ such that either $\cball{}{x}{1/i} \not \subset U$ or
	\begin{gather*}
		\| \delta \mu \| \, \cball{}{x}{\varrho} > ( 2
		\isoperimetric{\vdim} )^{-1} \mu ( \cball{}{x}{\varrho}
		)^{1-1/\vdim} \quad \text{for some $0 < \varrho < 1/i$}.
	\end{gather*}
	Moreover, define
	\begin{gather*}
		X_i = \classification{U}{x}{\density^{\vdim+\alpha q} ( \mu
		\restrict C_i, x ) = 0} \quad \text{for $i \in \nat$},
	\end{gather*}
	note $\vdim+\alpha q \leq \vdim^2/(\vdim-p)$ if $p < \vdim$, and
	observe by \ref{thm:structure_lower_density} in case $p < \vdim$, by
	\ref{miniremark:good_point} in case $p=\vdim$, that
	\begin{gather*}
		\mu ( U \without {\textstyle\bigcup} \{ X_i \with i \in \nat
		\} ) = 0.
	\end{gather*}
	Using \eqref{item:diff_mu:approx}, one constructs sequences $K_i$ of
	compact subsets of $U$ and $g_i : U \to Z$ such that
	\begin{gather*}
		\text{$K_i \subset A_j$ for some $j \in \nat$}, \quad
		f|K_i = g_i | K_i, \\
		\text{$g_i$ locally satisfies a H{\"o}lder condition with
		exponent $\alpha$}, \\
		\mu ( A \without {\textstyle\bigcup} \{ K_i \with i \in
		\nat \} ) = 0.
	\end{gather*}
	Also note $A_i \subset A_{i+1}$, $C_i \supset C_{i+1}$, and $X_i
	\subset X_{i+1}$ for $i \in \nat$.

	From the observations of the preceding paragraph, \cite[2.10.6,
	2.10.19\,(4)]{MR41:1976} and \eqref{item:diff_mu:approx} it follows
	that it is enough to prove $a \in \bigcup \{ X_j \with j \in
	\nat \}$ whenever $a \in A$ satisfies for some $i \in
	\nat$, some compact set $K$, and some $g : U \to Z$
	\begin{gather*}
		a \in X_i, \quad a \in K \subset A_i , \quad
		\density^\vdim ( \mu \restrict U \without K, a ) = 0,
		\quad g|K = f|K, \\
		\text{$g$ locally satisfies a H{\"o}lder condition with
		exponent $\alpha$}, \\
		r^{-\vdim-\alpha q} {\textstyle\int_{\cball{}{a}{r}}} | f (
		\xi) - g (\xi) |^q \ud \mu ( \xi ) \to 0 \quad \text{as $r
		\pluslim{0}$}.
	\end{gather*}
	For this purpose define $h = \sup \{ | g(x)-g(y)|/|x-y|^\alpha \with
	x,y \in K, x \neq y \}$, choose $j \in \nat$, $j \geq 2i$, and $0 < R <
	1/(2i)$ satisfying
	\begin{gather*}
		2^{q-1} i^2 \big ( ( 1/j + R )^{\alpha q} + h^q (2R)^{\alpha
		q} \big ) \leq \varepsilon 2^{-\vdim} ( 2
		\isoperimetric{\vdim} \vdim)^{-\vdim}.
	\end{gather*}

	Next, it will be shown\emph{
	\begin{gather*}
		{\textstyle\int_{\cball{}{x}{\varrho}}} | f ( \xi ) - f ( a
		) |^q \ud \mu ( \xi ) \leq \varepsilon 2^{-\vdim} ( 1 +
		|\zeta-x|/\varrho )^\vdim
		\measureball{\mu}{\cball{}{x}{\varrho}}
	\end{gather*}
	whenever $x \in \spt \mu \cap \cball{}{a}{r} \without C_i$, $\zeta
	\in K$, $|\zeta-x| = \dist (x,K)$, $0 < r \leq R$, $0 < \varrho <
	1/j$}. Noting
	\begin{gather*}
		\varrho + | \zeta - x | < 1/j + |x-a| \leq 1/j + R < 1/i, \\
		\cball{}{x}{\varrho} \subset \cball{}{\zeta}{\varrho+|\zeta-x|}
		\subset \oball{}{\zeta}{1/i} \subset U, \\
		| \zeta - a | \leq | \zeta - x | + | x - a | \leq 2 | x-a |
		\leq 2R, \\
		2^{q-1} i^2 \big ( ( \varrho + |\zeta-x| )^{\alpha q} + h^q |
		\zeta - a |^{\alpha q} \big ) \leq \varepsilon 2^{-\vdim} ( 2
		\isoperimetric{\vdim} \vdim )^{-\vdim},
	\end{gather*}
	one estimates
	\begin{multline*}
		{\textstyle\int_{\cball{}{x}{\varrho}}} | f ( \xi ) - f (a)
		|^q \ud \mu ( \xi ) \leq {\textstyle\int_{\cball{}{\zeta}{
		\varrho + | \zeta - x | }}} | f ( \xi ) - f (a) |^q \ud \mu (
		\xi ) \\
		\begin{aligned}
			& \leq 2^{q-1} \big ( {\textstyle\int_{\cball{}{
			\zeta}{\varrho + | \zeta - x | }}} | f ( \xi ) - f (
			\zeta ) |^q \ud \mu ( \xi ) + | f ( \zeta ) - f (a)
			|^q \measureball{\mu}{\cball{}{\zeta}{\varrho + |
			\zeta - x |}} \big ) \\
			& \leq 2^{q-1} i \big ( ( \varrho + | \zeta - x |
			)^{\alpha q} + h^q | \zeta - a |^{\alpha q} \big )
			\measureball{\mu}{\cball{}{\zeta}{\varrho + | \zeta -
			x | }} \\
			& \leq \varepsilon 2^{-\vdim} ( 2 \isoperimetric{\vdim}
			\vdim )^{-\vdim} ( 1 + | \zeta - x | / \varrho )^\vdim
			\varrho^\vdim
		\end{aligned}
	\end{multline*}
	and \ref{miniremark:good_point} implies the assertion. Therefore, if
	\begin{gather*}
		{\textstyle\int_{\cball{}{x}{\varrho}}} | f ( \xi ) - f (a)
		|^q \ud \mu ( \xi ) > \varepsilon \,
		\measureball{\mu}{\cball{}{x}{\varrho}},
	\end{gather*}
	then
	\begin{gather*}
		( 1 + | \zeta - x | / \varrho )^\vdim > 2^\vdim, \quad \varrho
		< | \zeta - x | \leq | x-a | \leq r, \quad | x-a | + \varrho <
		2r, \\
		\cball{}{x}{\varrho} \subset \oball{}{a}{2r} \without K
		\subset U.
	\end{gather*}

	This implies that for each $x \in \spt \mu \cap \cball{}{a}{r} \cap
	D_j (a) \without C_i$ with $0 < r \leq R$ there exists $0 < \varrho <
	1/j$ such that
	\begin{gather*}
		\cball{}{x}{\varrho} \subset \oball{}{a}{2r} \without K
		\subset U, \quad {\textstyle\int_{\cball{}{x}{\varrho}}} | f
		( \xi ) - f (a) |^q \ud \mu ( \xi ) > \varepsilon \,
		\measureball{\mu}{\cball{}{x}{\varrho}},
	\end{gather*}
	because $a \in A_i$, $x \in \cball{}{a}{r}$ implies $\cball{}{x}{1/j}
	\subset U$. Hence one infers from Besicovitch's covering theorem
	\begin{gather*}
		\mu ( \cball{}{a}{r} \cap D_j (a) \without C_i ) \leq
		\besicovitch{\adim} \varepsilon^{-1}
		{\textstyle\int_{\oball{}{a}{2r} \without K}} | f ( \xi ) -
		f (a) |^q \ud \mu ( \xi )
	\end{gather*}
	for $0 < r \leq R$. Recalling $a \in X_i$, the proof may be concluded
	by showing
	\begin{gather*}
		r^{-\vdim-\alpha q} {\textstyle\int_{\oball{}{a}{2r} \without
		K}} | f ( \xi ) - f (a) |^q \ud \mu ( \xi ) \to 0 \quad
		\text{as $r \pluslim{0}$}
	\end{gather*}
	which is a consequence of
	\begin{gather*}
		{\textstyle\int_{\oball{}{a}{2r} \without K}} | f ( \xi ) -
		f (a) |^q \ud \mu ( \xi ) \\
		\leq 2^{q-1} \big ( {\textstyle\int_{\oball{}{a}{2r}}} | f (
		\xi ) - g ( \xi ) |^q \ud \mu ( \xi ) +
		{\textstyle\int_{\oball{}{a}{2r} \without K}} | g ( \xi ) -
		g ( a ) |^q \ud \mu ( \xi ), \\
		{\textstyle\int_{\oball{}{a}{2r} \without K}} | g ( \xi ) - g
		(a) |^q \ud \mu ( \xi ) \leq \mu ( \oball{}{a}{2r} \without
		K ) (h_0)^q (2r)^{\alpha q}
	\end{gather*}
	for $0 < r \leq R$ with $h_0 = \sup \{ |g(x)-g(y)|/|x-y|^\alpha \with
	x,y \in \cball{}{a}{R}, x \neq y \}$.
\end{proof}
\begin{remark}
	If $p < \vdim$ the assumption $\alpha q / \vdim \leq
	(1-q/r)p/(\vdim-p)$ cannot be omitted in order to obtain the second
	part of \eqref{item:diff_mu:approx} as may be seen from the family of
	examples constructed in \ref{example:scaling}; in fact one can take
	$\alpha_1 = \alpha_2 = \alpha$, $q_1 = q_2 = q$, and $f =
	\chi_{\rel^{\vdim+1} \without T}$ in case $r=\infty$, and in case $r <
	\infty$ one can assume $q < r$ and apply
	\ref{example:scaling}\,\ref{example:scaling:last} with $r$, $s$,
	$\alpha_1 = \alpha_2$, $q_1 = q_2$ replaced by $r/q$, $\alpha q +
	\vdim$, $1$ and a number slighly larger than $\vdim p/(\vdim-p)$ to
	obtain a function $f \in \Lploc{r/q} ( \mu )$ such that the second
	statement of \eqref{item:diff_mu:approx} does not hold for $f$, $g$
	replaced by $f^{1/q}$, $0$.
\end{remark}
\begin{remark} \label{remark:rademacher}
	If $\dim Z < \infty$ and $\alpha = 1$, \eqref{item:diff_mu:approx} in
	conjunction with \cite[3.2.18, 3.1.16]{MR41:1976} implies that for
	$\mu$ almost all $a \in A$
	\begin{gather*}
		\lim_{\varrho \pluslim{0}}
		{\textstyle\fint_{\cball{}{a}{\varrho}}} ( | f ( \xi ) - f (a)
		- \left < ( T_a \mu ) ( \xi-a), ( \mu, \vdim ) \ap Df (a)
		  \right > |/|\xi-a|)^q \ud \mu ( \xi ) = 0
	\end{gather*}
	where the notion of approximate differentials, see
	\cite[3.2.16]{MR41:1976}, is employed.
\end{remark}
\begin{remark} \label{remark:final}
	\eqref{item:diff_mu:scaling_bad_set} can be seen in two ways as a
	refinement of the simple fact that
	\begin{gather*}
		\density^{\ast \vdim+\alpha q} ( \mu \restrict
		\classification{U}{x}{|f(x)-f(a)|^q>\varepsilon} , a )
		\leq \varepsilon^{-1} \density^{\ast \vdim+\alpha q} ( | f (
		\cdot ) - f(a) |^q \mu , a ) < \infty
	\end{gather*}
	whenever $a \in A$. Firstly, $|f(x)-f(a)|^q > \varepsilon$ is replaced
	in the definition of $D_i(a)$ by $\fint_{\cball{}{x}{\varrho}} | f
	(\xi)-f(a) |^q \ud \mu ( \xi ) > \varepsilon$ for some $0 < \varrho <
	1/i$.  Secondly, in the conclusion $\density^{\vdim+\alpha q} ( \mu
	\restrict D_i (a), a) = 0$ occurs instead of $\density^{\ast
	\vdim+\alpha q} ( \mu \restrict D_i (a), a ) < \infty$. Whereas the
	first improvement is vital for the applications in \cite{snulmenn:c2},
	the second one is only used under the stronger assumption
	\begin{gather*}
		\lim_{r \pluslim{0}} r^{-\vdim - \alpha q}
		{\textstyle\int_{\cball{}{x}{r}}} | f ( \xi ) - z |^q \ud
		\mu ( \xi ) = 0 \quad \text{for some $z \in Z$}
	\end{gather*}
	for $\mu$ almost all $x \in U$.
\end{remark}
\bibliography{ohne_dublikate} \bibliographystyle{alpha}
\end{document}